\newcommand*{\mailto}[1]{\href{mailto:#1}{\nolinkurl{#1}}}
\newcommand{\bbN}{{\mathbb{N}}}
\newcommand{\bbP}{{\mathbb{P}}}
\newcommand{\bbR}{{\mathbb{R}}}
\newcommand{\bbZ}{{\mathbb{Z}}}
\newcommand{\cA}{{\mathcal A}}
\newcommand{\cB}{{\mathcal B}}
\newcommand{\cD}{{\mathcal D}}
\newcommand{\cE}{{\mathcal E}}
\newcommand{\cF}{{\mathcal F}}
\newcommand{\cG}{{\mathcal G}}
\newcommand{\cM}{{\mathcal M}}
\newcommand{\cP}{{\mathcal P}}
\newcommand{\cV}{{\mathcal V}}
\newcommand{\bfi}{{\bf i}}
\DeclareMathOperator{\supp}{supp}
\DeclareMathOperator{\gen}{gen}
\DeclareMathOperator{\diag}{diag}
\newcommand{\no}{\notag}
\newcommand{\lb}{\label}
\newcommand{\wti}{\widetilde}
\newcommand{\bi}{\bibitem}
\let\geq\geqslant
\let\leq\leqslant
\def\theequation{\@arabic\c@equation}
\numberwithin{equation}{section}
\newtheorem{theorem}{Theorem}[section]
\newtheorem{hypothesis}[theorem]{Hypothesis}
\theoremstyle{remark}
\begin{document}

\numberwithin{equation}{section}
\allowdisplaybreaks

\title[Anderson Localization for radial tree graphs]{Anderson Localization for Radial Tree Graphs With Random Branching Numbers}

\author[D.\ Damanik]{David Damanik}
\address{ Department of Mathematics, Rice University, Houston, TX 77005, USA}
\email{\mailto{damanik@rice.edu}}
%\email{gesztesyf@missouri.edu}
%\urladdr{\url{https://www.math.missouri.edu/people/gesztesy}}
%\urladdr{https://www.math.missouri.edu/people/gesztesy}

\author[S.\ Sukhtaiev]{Selim Sukhtaiev}
\address{ Department of Mathematics, Rice University, Houston, TX 77005, USA}
\email{\mailto{sukhtaiev@rice.edu}}
%\email{\mailto{sswfd@mail.missouri.edu}}
%\email{sswfd@mail.missouri.edu}

%\dedicatory{}

\date{\today}
%\subjclass[]{}
\keywords{Anderson localization, Laplace operator, tree graphs}

%%%%%%%%%%%%
%%%%%%%%%%%%
\begin{abstract}
We prove Anderson localization for the discrete Laplace operator on radial tree graphs with random branching numbers. Our method relies on the representation of the Laplace operator as the direct sum of half-line Jacobi matrices whose entries are non-degenerate, independent, identically distributed random variables with singular distributions.
\end{abstract}
%%%%%%%%%%%%
%%%%%%%%%%%%

\maketitle

%\vspace*{-3mm}
%{\scriptsize{\tableofcontents}}
%\normalsize

%%%%%%%%%%%%%%%%%%%%%%%%%%%%%%
%%%%%%%%%%%%%%%%%%%%%%%%%%%%%%
\section{Introduction}  \lb{intro}

The main goal of this paper is to prove Anderson localization for the discrete Laplace operator on rooted radial tree graphs with random branching numbers. These are tree graphs with a fixed vertex $o$, the root, such that every vertex $v$ at a distance $n$ from the root $o$ is connected with $b_n \geq 2$ vertices at a distance $n+1$ from $o$, cf. Figure \ref{pic}. Assuming that $\{b_n\}_{n=0}^{\infty}$ is a sequence of non-degenerate i.i.d. random variables we show that, almost surely, the Laplace operator $\Delta$, cf. \eqref{a1}, has pure point spectrum and admits a basis of exponentially decaying eigenfunctions. More concretely, our main result is the following Theorem.

\begin{theorem}\lb{AL}
	Suppose that $\{\Delta(\omega)\}_{\omega\in\Omega}$ is a family of Laplace operators on radial tree graphs with random branching numbers. Suppose that the branching numbers are given by non-degenerate independent identically distributed random variables
	\begin{equation}\no
	\{b_n(\omega)\}_{n=0}^{\infty}\subset \{2,..., d\}\text{\ for some fixed}\ d>2.
	\end{equation}
	Then $\Delta(\omega)$ exhibits Anderson localization at all energies. That is, almost surely, $\Delta(\omega)$ has pure point spectrum and possesses a basis of exponentially decaying eigenfunctions.
\end{theorem}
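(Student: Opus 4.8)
The plan is to exploit the radial symmetry of the tree to realize $\Delta(\omega)$ as a direct sum of half-line Jacobi matrices with i.i.d.\ coefficients, to prove that each of these one-dimensional operators has a positive Lyapunov exponent at every energy, and then to invoke the large-deviation approach to one-dimensional Anderson localization, which is what is needed because the coefficient distributions are singular.

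\emph{Decomposition.} First I would perform the standard spherical-symmetry decomposition of $\ell^2$ over the vertex set: for each level $k$ one splits off the functions on the subtrees hanging at level $k$ that are radial within those subtrees and, among siblings, mean-zero ("angular") in an appropriate sense. On the radial part, conjugation by the weight $\sqrt{N_n}$, where $N_n=b_0(\omega)\cdots b_{n-1}(\omega)$ is the cardinality of the $n$-th sphere, identifies $\Delta(\omega)$ with a half-line Jacobi matrix $J^{(0)}(\omega)$ whose off-diagonal entries are $\sqrt{b_n(\omega)}$ and whose diagonal entries are fixed affine functions of $b_n(\omega)$ arising from the vertex degrees; on the angular piece attached at level $k\ge 1$ one obtains the analogous Jacobi matrix $J^{(k)}(\omega)$ on $\ell^2(\{k,k+1,\dots\})$ with a Dirichlet left endpoint, occurring with a finite (random) multiplicity $N_{k-1}(\omega)(b_{k-1}(\omega)-1)$. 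Since $\{b_n(\omega)\}$ are i.i.d.\ with a non-degenerate distribution supported on the finite set $\{2,\dots,d\}$, each $J^{(k)}(\omega)$ is an ergodic Jacobi matrix whose coefficients are i.i.d., non-degenerate, and finitely supported. As there are only countably many $J^{(k)}$, it suffices to show that each of them is almost surely of pure point type with eigenfunctions decaying exponentially in the level variable: pulling these back through the level-preserving unitary equivalence produces eigenfunctions of $\Delta(\omega)$ decaying exponentially in the graph distance, and completeness of the decomposition makes their totality a basis.

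\emph{Positive Lyapunov exponent.} Next I would rewrite $J^{(k)}\psi=E\psi$ as an $SL(2,\bbR)$ cocycle with one-step matrices $M_n(E)$ depending only on $b_n(\omega)$, i.e.\ an i.i.d.\ matrix cocycle taking finitely many values, and compute its Lyapunov exponent $\g_k(E)$. A short calculation shows that for $b'\ne b''$ the matrices $M(b';E)$ and $M(b'';E)$ never commute, so the group they generate is non-abelian, hence not contained in a conjugate of $SO(2)$, hence non-compact for every $E$. Strong irreducibility of this group (absence of an invariant finite union of lines in the real projective line) fails only on an explicit discrete set of energies; away from it Furstenberg's theorem yields $\g_k(E)>0$, and at the remaining energies positivity is checked by hand by exhibiting a deterministic direction that every $M_n(E)$ expands by a factor bounded below. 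Thus $\g_k(E)>0$ for all $E\in\bbR$.

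\emph{Localization and conclusion.} Because the single-site distribution is singular, the soft spectral-averaging arguments (Kotani--Simon, Simon--Wolff) that usually promote positivity of $\g_k$ to pure point spectrum are not available; instead I would follow the quantitative route of Carmona--Klein--Martinelli in its modern large-deviations formulation. The ingredients are uniform (in $E$ on compacts) large-deviation estimates for $\tfrac1n\log\Norm{M_n(E)\cdots M_1(E)}$ about $\g_k(E)$, together with Hölder continuity of $\g_k$ and of the integrated density of states; these give off-diagonal exponential decay of finite-volume Green's functions with overwhelming probability on every scale, a Borel--Cantelli argument then forces every polynomially bounded generalized eigenfunction to decay exponentially, and Schnol's theorem converts this into almost sure pure point spectrum with exponentially decaying eigenfunctions for each $J^{(k)}$. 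Intersecting these countably many full-measure events and transporting the eigenfunctions to the tree through the decomposition of the first step proves the theorem. The main obstacle is precisely this last step: the singularity of the distribution blocks the classical averaging argument and necessitates the delicate large-deviation and regularity machinery, while the exceptional energy at which the transfer-matrix cocycle loses strong irreducibility must be treated separately.
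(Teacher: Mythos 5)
Your overall architecture matches the paper's: decompose $\Delta(\omega)$ into half-line Jacobi matrices with i.i.d.\ entries $\sqrt{b_n(\omega)}$ (the paper uses Breuer's explicit orthonormal basis $\varphi_{N,k,j}$ supported on single spheres, and the matrices have \emph{zero} diagonal since $\Delta$ here is the adjacency operator, not the combinatorial Laplacian), prove positivity of the Lyapunov exponent via Furstenberg, and run a large-deviation localization scheme adapted to singular single-site distributions (the paper follows Bucaj--Damanik--Fillman et al.\ rather than Carmona--Klein--Martinelli, but that is the same circle of ideas). However, there is one genuine and non-removable gap in your second step.

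You assert that at the exceptional energies where strong irreducibility fails, positivity of the Lyapunov exponent ``is checked by hand by exhibiting a deterministic direction that every $M_n(E)$ expands,'' concluding $\gamma_k(E)>0$ for all $E\in\bbR$. This is false: the exceptional energy is $E=0$, and there $L(0)=0$. Indeed, the one-step matrix at $E=0$ is off-diagonal, so the two-step products are diagonal with entries $\pm(\omega_{2i-1}/\omega_{2i})^{\pm 1/2}$; hence $\frac{1}{2n}\log\|\cM^0_{2n}(\omega)\|=\frac{1}{2n}\bigl|\sum_{i=1}^n \log(\omega_{2i-1}/\omega_{2i})\bigr|\to 0$ almost surely by the law of large numbers, since the summands are centered i.i.d.\ variables. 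No deterministic uniformly expanded direction exists. Consequently your route does not deliver exponential decay of eigenfunctions of $J^{(k)}$ at energy $0$, and the localization machinery must be run on $[-2\sqrt d,2\sqrt d]\setminus(-1/l,1/l)$ and then intersected over $l$, as the paper does. Two separate repairs are then needed, both present in the paper and absent from your proposal: (i) pure point spectrum is unaffected because the single point $\{0\}$ cannot support continuous spectrum, so excluding it from the generalized-eigenvalue argument is harmless; (ii) exponential decay of the \emph{tree} eigenfunctions at a possible eigenvalue $0$ does not come from the Lyapunov exponent at all, but from the normalization $\|\varphi_{N,k,j+1}\|_{\ell^\infty}=b_j^{-1/2}\|\varphi_{N,k,j}\|_{\ell^\infty}\le 2^{-1/2}\|\varphi_{N,k,j}\|_{\ell^\infty}$ of the radial basis vectors, which contributes a deterministic decay rate $\frac{\log 2}{2}>0$ on top of $L(E)$ when the $\ell^2(\bbZ^+)$ eigenfunctions are pulled back to $\Gamma$. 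Your final ``transport'' step omits this volume factor, which is exactly what makes the theorem true at all energies.
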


The spectral theory of Schr\"odinger operators on tree graphs has attracted a lot of attention; see, for example, \cite{ASW1, ASW2, ASW3, Br07, HiPo, K1, K2, K3, SST, SS, S} and references therein. When studying the effects of randomness, one can for example consider random geometry (and then one typically studies the Laplacian), or one can consider a random potential (and then one typically fixed regular geometry). The effects of disorder in the geometry of trees have been studied in \cite{ASW1, HiPo}. In \cite{ASW1} the authors consider trees with edge lengths given by $\ell_e(\omega)=\ell e^{\lambda \omega_e}$, where $\ell>0$ is fixed, and $\lambda\in[0,1]$ determines the strength of the disorder and $\{\omega_e\}_{e\in\cE}$ are i.i.d. random variables. It is proved in \cite{ASW1} that the absolutely continuous spectrum of the Laplace operator is continuous (in the sense of \cite[Theorem 1.1]{ASW1}) at $\lambda=0$ almost surely. In the same work it is conjectured that such a continuity property fails in the case of radial disorder. This conjecture was settled in \cite{HiPo}, where the authors showed that in the radial case the spectrum is almost surely pure point. In fact, they proved Anderson localization for the random length and random Kirchhoff models by essentially the same method. Our work is motivated by that of P.~Hislop and O.~Post. The random branching model considered in this paper naturally complements the two models considered in \cite{HiPo}. However, it is worth noting that the methods of \cite{HiPo} are not applicable in the present setting since they are based on spectral averaging and hence rely heavily on the assumption that the random variables are absolutely continuous. Of course, in the case of random branching numbers such a hypothesis cannot be made. We therefore turn to a recent work \cite{BuDaFi} (which was inspired by \cite{BoSc1}) that offers a new proof of Anderson localization for random Schr\"odinger operators on $\bbZ$. Their methods can be adapted to show localization for the random half-line Jacobi matrices \eqref{a2} that naturally arise in the context of Laplace operators on radial tree graphs. Namely, we show that almost surely for every generalized eigenvalue $E$, cf. \eqref{b27}, one has
\begin{equation}\no
\lim\limits_{n\rightarrow\infty}\frac{1}{n}\log \|\cM_n^E(\omega)\|=L(E),
\end{equation}
where $\cM_n^E(\omega)$ is the $n$-step transfer matrix, cf. \eqref{a12}, \eqref{a14}, and $L(E)$ is the Lyapunov exponent which is shown to be positive for $E\not=0$. Once we have established localization for these Jacobi matrices, we then show that the Laplace operator $\Delta$ almost surely has a basis of exponentially decaying eigenfunctions, with almost exponential decay rate $L(E)+\frac{\log2}{2}$.
\begin{figure}
	\tikzstyle{level 1}=[sibling angle=120]
	\tikzstyle{level 2}=[sibling angle=60]
	\tikzstyle{level 3}=[sibling angle=30]
	\begin{tikzpicture}[grow cyclic, level distance=10mm]
	%\put(20,87){{\tiny \text{eigenvalues}}}
	\draw[thin,dashed] (0,0) circle (1cm);
	\draw[thin,dashed] (0,0) circle (1.92cm);
	
	%\draw[thin,dashed] (0,0) circle (3cm);
	\node {$\bullet$}
	child {node {$\bullet$}child {node {$\bullet$}child {node {$\bullet$}}
			child {node {$\bullet$}}
			child {node {$\bullet$}}}
		child {node {$\bullet$}child {node {$\bullet$}}
			child {node {$\bullet$}}
			child {node {$\bullet$}}}}
	child {node {$\bullet$}child {node {$\bullet$}child {node {$\bullet$}}
			child {node {$\bullet$}}
			child {node {$\bullet$}}}
		child {node {$\bullet$}child {node {$\bullet$}}
			child {node {$\bullet$}}
			child {node {$\bullet$}}}}
	child {node {$\bullet$}child {node {$\bullet$}child {node {$\bullet$}}
			child {node {$\bullet$}}
			child {node {$\bullet$}}}
		child {node {$\bullet$}child {node {$\bullet$}}
			child {node {$\bullet$}}
			child {node {$\bullet$}}}}
	;
	\draw (.4,-.6) node {\tiny{gen 1}};
	\draw (1.1,-1.2) node {\tiny{gen 2}};
	\draw (2.4,-2) node {\tiny{gen 3}};
	\end{tikzpicture}
	\caption{\ The first three generations of a radial rooted tree graph $\Gamma$ with branching numbers $b_0=3,\ b_1=2, b_2=3$.}\lb{pic}
\end{figure}
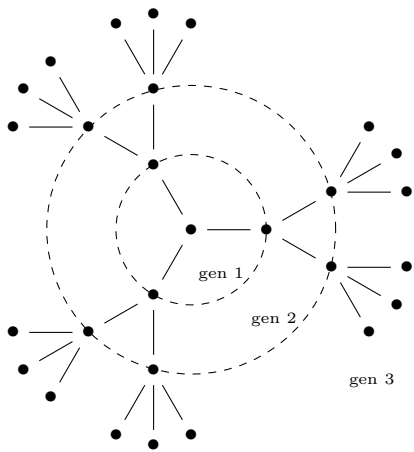

\section{The Laplacian on Radial Tree Graphs}\label{sec.2}
In this section we consider the Laplace operator on discrete rooted tree graphs $\Gamma=(\cV, \cE)$, with the set of vertices $\cV$ and the set of edges $\cE$. A tree graph $\Gamma$ is a graph without nontrivial closed paths. A tree graph $\Gamma$ together with a fixed vertex $o\in\cV$ is called rooted tree graph. The distance $d(u,v)$ between two vertices $u,v\in\cV$ is defined as the number of edges of the unique path connecting $u$ and $v$. The generation of a vertex $v$ is defined by $\gen(v):=d(o,v)$ and its branching number is given by $b(v):=\deg v-1$ if $v\not=o$, and $b(o)=\deg o$. A rooted tree graph is called radial if the branching number is a function of the generation only, that is $b(v)=b_n\geq 0$ whenever $\gen(v)=n$, for some sequence $\{b_n\}_{n=0}^{\infty}\subset \bbN$. In other words, cf. \cite[Definition 2.1]{Br07}, a rooted graph is radial if any vertex $v$ in generation $n$ is connected with $b_n$ vertices in generation $n+1$.

Assuming that the sequence $\{b_n\}_{n=1}^{\infty}$ is bounded, we define the Laplace operator by the formula
\begin{equation}\lb{a1}
(\Delta f)(v):=\sum_{u\in\cV\,:\,d(u,v)=1}f(u),\ f\in \ell^2(\Gamma).
\end{equation}
The boundedness of the sequence of branching numbers yields $\Delta\in\cB(\ell^2(\Gamma))$, moreover, since $\Delta$ is clearly symmetric, one has $\Delta=\Delta^*$ in $\ell^2(\Gamma)$.

An important step in the proof of Anderson localization is representing $\Delta$ as the direct sum of half-line Jacobi matrices given by
\begin{equation}\lb{a2}
J_n:=\begin{pmatrix}
0&\sqrt{b_{n}}&0&\ \\
\sqrt{b_{n}}&0&\sqrt{b_{n+1}}&\ddots \\
0&\sqrt{b_{n+1}}&\ddots&\ddots \\
\ &\ddots&\ddots&\ddots\\
\end{pmatrix},\ n\in\bbZ^+
\end{equation}
and acting in $\ell^2(\bbZ^+)$.

We assume that non-trivial branching occurs in every generation.
\begin{hypothesis}\lb{a3}
Suppose that for some $d\in\bbN$, $b_n\in\{2,..., d\}$, $n\in\bbZ^+$.
\end{hypothesis}

\begin{theorem}\lb{a10}{\ \cite[Theorem 2.4]{Br07}} Assume Hypothesis \ref{a3}. Let us define
\begin{equation}\lb{a5}
\beta_k:=
\begin{cases}
(b_{k-1}-1)\prod_{j=0}^{k-2}b_j,\ &k\geq 2,\\
b_0-1, &k=1,\\
1, &k=0.
\end{cases}
\end{equation}
Then  one has
\begin{equation}\lb{a6}
\ell^2(\Gamma)=\bigoplus_{N=0}^{\infty} \bigoplus_{k=1}^{\beta_N} H_{N,k},\ \Delta\upharpoonright_{H_{N,k}}\cong J_N.
\end{equation}
where every subspace $H_{N,k}$ reduces $\Delta$, for all admissible $N,k$.

% Moreover,
%\begin{equation}\lb{a6}
%\Delta\upharpoonright_{H_k}=\bigoplus_{i=1}^{\beta_k} J_i.
%\end{equation}
\end{theorem}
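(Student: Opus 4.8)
The plan is to obtain \eqref{a6} as the explicit spherical (radial) decomposition of $\Delta$, arranged so that the off-diagonal weights $\sqrt{b_n}$ in the Jacobi matrices $J_N$ are produced by the vertex multiplicities of $\Gamma$.

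\textbf{Step 1: the radial part.} A function depending only on the generation, $f(v)=g(\gen v)$, satisfies, by \eqref{a1}, $(\Delta f)(v)=g(n-1)+b_n g(n+1)$ for $\gen v=n\ge1$ and $(\Delta f)(o)=b_0 g(1)$. Since generation $n$ contains $w_n:=\prod_{j=0}^{n-1}b_j$ vertices (empty product $=1$), one has $\norm{f}_{\ell^2(\Gamma)}^2=\sum_{n\ge0}w_n|g(n)|^2$, so the substitution $h(n):=w_n^{1/2}g(n)$ conjugates the action of $\Delta$ on such profiles to the tridiagonal operator with off-diagonal entries $(w_n/w_{n-1})^{1/2}=b_{n-1}^{1/2}$, i.e.\ to $J_0$. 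This produces the summand $H_{0,1}$ with $\Delta\upharpoonright_{H_{0,1}}\cong J_0$ and $\beta_0=1$.

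\textbf{Step 2: the deeper parts.} Fix $n\ge1$ and a vertex $v$ with $\gen v=n-1$, with children $u_1,\dots,u_{b_{n-1}}$ and pendant subtrees $T_{u_1},\dots,T_{u_{b_{n-1}}}$. Given a profile $g\colon\{n,n+1,\dots\}\to\C$ and a vector $c=(c_i)\in\C^{b_{n-1}}$ with $\sum_i c_i=0$, define $f(w):=c_i\,g(\gen w)$ for $w\in T_{u_i}$ and $f(w):=0$ otherwise. The constraint $\sum_i c_i=0$ kills $(\Delta f)(v)=\big(\sum_i c_i\big)g(n)$, so $\Delta f$ is again of this shape; running the computation of Step 1 with the weights $\prod_{j=n}^{m-1}b_j$ on the profile variable $m$ and the index shift $m\mapsto m-n$ shows that $\Delta$ acts on the profile unitarily equivalently to $J_n$. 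Choosing, for each vertex $v$ of generation $n-1$, an orthonormal basis of the $(b_{n-1}-1)$-dimensional subspace $\{c:\sum_i c_i=0\}$ yields $(b_{n-1}-1)\prod_{j=0}^{n-2}b_j=\beta_n$ reducing subspaces $H_{n,k}$ with $\Delta\upharpoonright_{H_{n,k}}\cong J_n$; for $n=1$ (with $v=o$) this gives $\beta_1=b_0-1$.

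\textbf{Step 3: orthogonality and completeness.} Orthogonality of distinct $H_{N,k}$ breaks into: the same parent vertex (handled by the orthonormal-basis choice of Step 2); distinct parents of equal generation, or parents neither of whose subtree contains the other (disjoint supports); and nested parents $v,v'$ at generations $N-1<N'-1$, where $f_{N,k}$ restricted to each generation is constant on the relevant subtrees $T_{u'_{j'}}$ over which $f_{N',k'}$ spreads, so the inner product is proportional to $\sum_{j'}c'_{j'}=0$. Completeness I would establish generation by generation: the generation-$n$ slices of $H_{0,1}$ and of the $H_{N,k}$ with $1\le N\le n$ are mutually orthogonal and number $1+\sum_{N=1}^{n}\beta_N$, which telescopes to $\prod_{j=0}^{n-1}b_j$, the number of vertices in generation $n$; hence they span all functions supported on generation $n$, and since finitely supported functions are dense in $\ell^2(\Gamma)$ and split into generation slices, $\bigoplus_{N,k}H_{N,k}=\ell^2(\Gamma)$. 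The one genuinely delicate point is the bookkeeping in this last step — checking that the multiplicities $\beta_N$ add up across all generations with no slice missed or double counted; everything else is routine if slightly tedious verification.
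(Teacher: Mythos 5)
Your proof is correct and is essentially the argument behind the cited result: the paper itself gives no proof of Theorem \ref{a10} but imports it from \cite[Theorem 2.4]{Br07}, whose proof is exactly this spherical decomposition into a radial part plus, for each vertex of generation $N-1$, the $(b_{N-1}-1)$-dimensional space of mean-zero weight vectors on its children, with the weights $\sqrt{b_n}$ arising from the unitary rescaling by the square root of the generation counts. Your dimension count $1+\sum_{N=1}^{n}\beta_N=\prod_{j=0}^{n-1}b_j$ and the generation-slice orthogonality/completeness check are the right bookkeeping and match that construction.
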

Concretely, there exists an orthonormal basis
\begin{equation}\no
\{\varphi_{N,k,j}\}_{N\in\bbZ^+,\, 1\leq k\leq \beta_N,\, j\in\bbZ^+}\text{\ in\ } \ell^2(\Gamma),
\end{equation}
so that

1) supp$(\varphi_{N,k,j})\subset \{ v\in\cV: \text{gen}(v)=N+j\}$, for all admissible $N,k,j$,

2) $\{\varphi_{N,k,j}\}_{j\in\bbZ^+}$ is an orthonormal basis for $H_{N,k}$, for all admissible $N,k$.

3) $\|\varphi_{N,k,j+1}\|_{L^{\infty}(\Gamma)}=b_j^{-1/2}\|\varphi_{N,k,j}\|_{L^{\infty}(\Gamma)}$ for all admissible $N,k,j$.

Associated with this basis is a unitary operator $\Phi$ given by
\begin{align}\lb{e1}
\Phi:  \ell^2(\Gamma)\rightarrow \bigoplus_{N=0}^{\infty} \bigoplus_{k=1}^{\beta_N} \ell^2(\bbZ^+);\ \varphi_{N,k,j}\mapsto \delta_{j}(N,k),\ j\in\bbZ^+,
\end{align}
where $\delta_j(N,k)=\delta_j$ should be viewed as a vector in the $N,k-$th copy of $\ell^2(\bbZ^+).$ Then one has
\begin{equation}\lb{f1}
\Phi \Delta \Phi^{-1}=\bigoplus_{N=0}^{\infty} \bigoplus_{k=1}^{\beta_N}  J_N(k),
\end{equation}
where $J_N(k)=J_N$ is the Jacobi matrix acting in the $N,k-$th copy of $\ell^2(\bbZ^+)$ and given by \eqref{a2}. %with $b_n$'s replaced by $\omega_n$'s.

%\subsection{Random Branching Model}

\section{Anderson Localization for Half-Line Jacobi Matrices}
In this section we turn to radial trees with branching numbers given by i.i.d. random variables which take values in the set $\cA=\{2,...,d\}$ for some fixed $d>2$. Suppose that $(\cA,\wti\mu)$ is a probability space and that $\supp \wti\mu$ contains at least two elements, $\#\supp \wti\mu\geq 2$. Furthermore, let us denote $\Omega:=(\supp \wti\mu)^{\bbZ^+}$, $\mu:=\wti\mu ^{\bbZ^+}$ and define the left shift
\begin{equation}\no
(T\omega)_n:=\omega_{n+1},\ \omega\in\Omega,\ n\in\bbZ^+.
\end{equation}
For a given $\omega\in\Omega$, $\Delta({\omega})$ denotes the Laplace operator on the radial tree graph $\Gamma_{\omega}$ with branching numbers defined by $b_n(\omega):=\omega_n$.

As discussed in Theorem \ref{a10}, the operator $\Delta({\omega})$ gives rise to a sequence of Jacobi matrices. The $n-$th matrix of this sequence is given by $J_0(T^n\omega)$, where the self-adjoint operator $J_0({\omega})$ is acting in $\ell^2(\bbZ^+)$ and given by
\begin{equation}\lb{a7}
[J_0({\omega})u](n):=
\begin{cases}
\sqrt{\omega_{n-1}}u(n-1)+\sqrt{\omega_{n}}u(n+1),\ &n\in\bbN, \\
\sqrt{\omega_{0}}u(1),&n=0.
\end{cases}
\end{equation}
A sequence $u=\{u_n\}_{n=0}^{\infty}$ satisfies
\begin{equation}\lb{a30}
\begin{cases}
\sqrt{\omega_{n-1}}u(n-1)+\sqrt{\omega_{n}}u(n+1)=Eu(n),\ &n\in\bbN, \\
\sqrt{\omega_{0}}u(1)=Eu(0),&
\end{cases}
\end{equation}
$E\in\bbR$ if and only if
\begin{equation}\lb{a12}
\begin{bmatrix}
u_{n+1}\\\sqrt{\omega_n}u_n
\end{bmatrix}=
\cM^E(T^{n-1}\omega)
\begin{bmatrix}
u_{n}\\\sqrt{\omega_{n-1}}u_{n-1}
\end{bmatrix},\
\cM^E(\omega):=
\begin{bmatrix}
\frac{E}{\sqrt{\omega_1}}& -\frac{1}{\sqrt{\omega_1}}\\
\sqrt{\omega_1}&0
\end{bmatrix},
\end{equation}
$n\geq 1.$
The map $\cM^{E}:\Omega\rightarrow \text{SL}(2,\bbR)$ determines an  $\text{SL}(2,\bbR)$-cocycle in a canonical way
\begin{equation}\no
(T,\cM^{E}): \Omega\times\bbR^2\rightarrow  \Omega\times\bbR^2,\ (T,\cM^{E})(\omega, v)=(T\omega, \cM^E(\omega)v).
\end{equation}
The iterates over the skew product are given by $(T,\cM^E)^n:=(T^n, \cM^E_n)$, where $\cM^E_0:=I_2$ and
\begin{equation}\lb{a14}
\cM^E_n(\omega)=\prod_{i=n-1}^0\cM^E(T^i\omega), n\in\bbN.
\end{equation}
The Lyapunov exponent of this cocycle is defined by
\begin{equation}\lb{a15}
L(E):=\lim\limits_{n\rightarrow\infty}\frac{1}{n}\int_{\Omega}\log \|\cM_n^E(\omega)\| \, d\mu(\omega).
\end{equation}
By Kingman's Subadditive Ergodic Theorem,
\begin{equation}\lb{a16.1}
L(E)=\lim\limits_{n\rightarrow\infty}\frac{1}{n}\log \|\cM_n^E(\omega)\|
\end{equation}
for $\mu-$almost every $\omega$.

Our proof of positivity of the Lyapunov exponent is based on the following facts.
\begin{theorem}\lb{a16}
Let $\nu$ be a probability measure on SL$(2,\bbR)$ that satisfies
\begin{equation}\no
\int \log\|M\| \, d\nu(M) < \infty.
\end{equation}
Let $G_{\nu}$ be the smallest closed subgroup of SL$(2,\bbR)$ that contains supp $\nu$.

(i) \cite[Theorem 8.6]{F63} Assume that $G_{\nu}$ is not compact and that it is strongly irreducible (cf. the definition preceding Theorem 2.1 in \cite{BuDaFi}). Then the Lyapunov exponent $L(\nu)$ associated with $\nu$ is positive.

(ii) \cite[Theorem B]{FuKi} Assume that the set
\begin{equation}\no
\emph{Fix}(G_{\nu}):=\{V\in\bbR\bbP^1: MV=V\text{\ for every\ } M\in G_{\nu}\}
\end{equation}
contains at most one element. If $\nu_k\rightarrow\nu$ weakly and boundedly $($cf. the definitions preceding Theorem 2.5 in \cite{BuDaFi}$)$, then $L(\nu_k)\rightarrow L(\nu)$ as $k\rightarrow\infty$.
\end{theorem}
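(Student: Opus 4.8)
This theorem collects two results from the literature --- Furstenberg's positivity criterion and the Furstenberg--Kifer continuity theorem --- so the plan is to recall the structure of their proofs rather than to reprove them in detail. For part (i) I would argue via Furstenberg's criterion. Since $\mathrm{SL}(2,\bbR)$ acts continuously on the compact space $\bbR\bbP^1$, there always exists a $\nu$-stationary probability measure $\eta$ on $\bbR\bbP^1$, and $L(\nu)=0$ holds precisely when some such $\eta$ is in fact $G_\nu$-invariant; this equivalence is obtained from a martingale-convergence argument for the measure-valued martingale $M_1\cdots M_n\,\eta$ together with the Furstenberg integral formula $L(\nu)=\int\!\int\log\frac{\|Mv\|}{\|v\|}\,d\eta(V)\,d\nu(M)$. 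It then remains to show that a non-compact, strongly irreducible $G_\nu$ admits no invariant probability measure on $\bbR\bbP^1$: strong irreducibility forbids atoms (an atom of maximal mass would produce a finite $G_\nu$-invariant set of directions), while the stabilizer in $\mathrm{PGL}(2,\bbR)$ of a non-atomic measure on $\bbR\bbP^1$ is compact (a divergent sequence in the stabilizer exhibits north--south dynamics and pushes $\eta$ towards a point mass), contradicting non-compactness of $G_\nu$. Hence $L(\nu)>0$.

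For part (ii), upper semicontinuity of $\nu\mapsto L(\nu)$ under bounded weak convergence is immediate from the subadditive representation $L(\nu)=\inf_{n}\tfrac1n\int\log\|M_1\cdots M_n\|\,d\nu^{\otimes n}$, since each term is continuous under this mode of convergence (the boundedness built into bounded convergence supplies exactly the uniform integrability needed to pass $\int\log\|M\|\,d\nu_k$ to the limit). The substantive part is lower semicontinuity, and this is where the hypothesis $\#\mathrm{Fix}(G_\nu)\le 1$ is used. Following Furstenberg--Kifer, I would choose $\nu_k$-stationary measures $\eta_k$ realizing $L(\nu_k)$ via the Furstenberg formula, extract a weak-$*$ limit $\eta$ (necessarily $\nu$-stationary), and establish the bound $\int\!\int\log\frac{\|Mv\|}{\|v\|}\,d\eta\,d\nu\le\liminf_k L(\nu_k)$. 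The only way the left-hand side can fail to equal $L(\nu)$ is for part of the mass of the $\eta_k$ to collapse, in the limit, onto a direction fixed by $G_\nu$; the assumption that at most one such direction exists is exactly what prevents this degeneration from decreasing the value of the integral, yielding $\liminf_k L(\nu_k)\ge L(\nu)$.

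The common obstacle is the analysis of stationary measures on $\bbR\bbP^1$. In (i) the delicate step is the martingale argument upgrading stationarity to genuine invariance when $L(\nu)=0$; once that is in hand, the compactness-of-stabilizer input is classical and the group-theoretic contradiction is short. In (ii) the delicate step is the escape-of-mass analysis, which must exclude --- under the $\mathrm{Fix}(G_\nu)$ hypothesis --- the scenario where the $\eta_k$ concentrate on a single fixed direction while the Furstenberg integral jumps, this being precisely the failure mode that the one-element bound on $\mathrm{Fix}(G_\nu)$ rules out.
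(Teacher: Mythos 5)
The paper offers no proof of Theorem \ref{a16}: both parts are quoted as black boxes, (i) from \cite[Theorem 8.6]{F63} and (ii) from \cite[Theorem B]{FuKi} (as restated in \cite{BuDaFi}), and are then simply invoked in the proof of Theorem \ref{3.3}. So there is no internal argument to compare yours against; what you have written is a reconstruction of the classical proofs, and as such it is essentially accurate. For (i), your outline --- Furstenberg's formula over a stationary measure on $\bbR\bbP^1$, the martingale argument showing that $L(\nu)=0$ forces a stationary measure to be genuinely $G_\nu$-invariant, and the incompatibility of an invariant probability measure with a non-compact strongly irreducible group (strong irreducibility rules out atoms, and the stabilizer of a non-atomic measure on $\bbR\bbP^1$ is compact) --- is the standard route. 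For (ii), upper semicontinuity from the subadditive infimum representation under bounded weak convergence is exactly right. The one place your sketch is materially vaguer than the source is the lower-semicontinuity step: in F\"urstenberg--Kifer the argument is organized around the index $\alpha(\eta)=\int\!\int\log\big(\|Mv\|/\|v\|\big)\,d\nu\,d\eta$ of stationary measures, the identity $L(\nu)=\max_\eta\alpha(\eta)$, and the structural fact that a stationary $\eta$ with $\alpha(\eta)<L(\nu)$ must be carried by a $G_\nu$-fixed line; the hypothesis $\#\mathrm{Fix}(G_\nu)\le 1$ is what controls which deficient stationary measures can appear as limits of the $\eta_k$. Asserting that this hypothesis is ``exactly what prevents the degeneration'' is a placeholder rather than an argument --- in particular it does not address the case where $\mathrm{Fix}(G_\nu)$ has exactly one element, so that a deficient stationary measure genuinely exists and must be shown not to attract the limiting mass. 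If you meant your write-up as more than a pointer to the literature, that is the step to fill in; for the paper's purposes the citation alone is what is actually used.
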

A subgroup of SL$(2,\bbR)$ is called contracting if there exists a sequence $\{g_n\}_{n=1}^{\infty}$ of its elements such that $\|g_n\|g^{-1}_n$ converges to a rank-one matrix.
\begin{theorem}\lb{3.3} Let $\nu^{E}$ denote the push-forward measure of $\wti\mu$ under the map $\cM^{E}$, cf. \eqref{a12}. Suppose that $G_{\nu^E}$ is the smallest subgroup of SL$(2,\bbR)$ that contains $\supp \nu^{E}$.
Then $G_{\nu^E}$ is a non-compact, strongly irreducible, contracting subgroup for every $E\in\bbR\setminus\{0\}$. Moreover, one has
\begin{equation}\lb{a20}
\emph{Fix}(G_{\nu^E}):=\{V\in\bbR\bbP^1: MV=V\text{\ for every\ } M\in G_{\nu^E}\}=\emptyset,
\end{equation}
for all $E\in\bbR$. Furthermore, the Lyapunov exponent is a continuous function of the energy $E$. In addition, $L(E)>0$ if $E\not=0$ and $L(0)=0$.
\end{theorem}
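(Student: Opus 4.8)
The plan is to reduce Theorem~\ref{3.3} to the finitely many matrices
\[
M^E(a):=\begin{bmatrix}\frac{E}{\sqrt{a}}& -\frac{1}{\sqrt{a}}\\ \sqrt{a}&0\end{bmatrix},\qquad a\in\supp\wti\mu,
\]
for which $\nu^E=\sum_{a\in\supp\wti\mu}\wti\mu(\{a\})\,\delta_{M^E(a)}$ and $\supp\nu^E=\{M^E(a):a\in\supp\wti\mu\}$ has at least two elements. Fix two distinct values $a\neq a'$ in $\supp\wti\mu$. The computational heart of the argument is the identity
\[
M^E(a)\,M^E(a')^{-1}=\diag\!\big(\sqrt{a'/a},\,\sqrt{a/a'}\,\big)=:D,
\]
valid for \emph{every} $E\in\bbR$ — the $E$-dependence cancels. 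Since $a\neq a'$, the trace of $D$ is $\sqrt{a'/a}+\sqrt{a/a'}=(a+a')/\sqrt{aa'}>2$, so $D$ is hyperbolic; as $D\in G_{\nu^E}$ this already shows that $G_{\nu^E}$ is noncompact and contracting for every $E\in\bbR$ (the normalized powers of $D$ converge to the rank-one projection onto its expanding eigendirection). The only directions in $\bbR\bbP^1$ fixed by $D$ are the coordinate axes $e_1,e_2$, and $M^E(a)e_1=(E/\sqrt a,\sqrt a)$, $M^E(a)e_2=(-1/\sqrt a,0)$ are never proportional to $e_1$, respectively to $e_2$ (the entries $\sqrt a$ and $1/\sqrt a$ do not vanish); since $\mathrm{Fix}(G_{\nu^E})\subseteq\{e_1,e_2\}$ this yields $\mathrm{Fix}(G_{\nu^E})=\emptyset$ for all $E\in\bbR$ at once.

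For strong irreducibility when $E\neq0$, I would argue that any nonempty finite $G_{\nu^E}$-invariant set $F\subset\bbR\bbP^1$ is in particular $D$-invariant, hence a union of finite $D$-orbits; as $D$ is hyperbolic, its only finite orbits on $\bbR\bbP^1$ are the singletons $\{e_1\}$ and $\{e_2\}$, so $F\subseteq\{e_1,e_2\}$. But for $E\neq0$ the matrix $M^E(a)$ moves $e_1$ to a direction with both coordinates nonzero (hence outside $\{e_1,e_2\}$) and sends $e_2$ to $e_1$, so neither $e_1$ nor $e_2$ can lie in the $M^E(a)$-invariant set $F$, forcing $F=\emptyset$ — a contradiction. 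Thus no finite union of lines is invariant, i.e.\ $G_{\nu^E}$ is strongly irreducible for $E\neq0$. (This step genuinely fails at $E=0$: there $M^0(a)$ interchanges $e_1$ and $e_2$, so $G_{\nu^0}$ preserves the pair $\{e_1,e_2\}$ and is only irreducible, not strongly irreducible — which is exactly why $L(0)=0$.)

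Positivity of the Lyapunov exponent for $E\neq0$ is then immediate from Theorem~\ref{a16}(i): $\nu^E$ has finite support, so $\int\log\|M\|\,d\nu^E<\infty$, and $G_{\nu^E}$ is noncompact and strongly irreducible, whence $L(E)=L(\nu^E)>0$. The value $L(0)=0$ I would instead obtain by direct computation: from $M^0(a)M^0(b)=-\diag(\sqrt{b/a},\sqrt{a/b})$, pairing consecutive factors in \eqref{a14} gives, for $n=2k$,
\[
\cM^0_{2k}(\omega)=(-1)^k\diag\!\big(e^{S_k(\omega)/2},\,e^{-S_k(\omega)/2}\big),\qquad
S_k(\omega):=\sum_{j=1}^{k}\big(\log\omega_{2j-1}-\log\omega_{2j}\big),
\]
where the summands are independent, identically distributed, bounded, and of mean zero; by the strong law of large numbers $S_k(\omega)/k\to0$ almost surely, hence $\tfrac{1}{2k}\log\|\cM^0_{2k}(\omega)\|=|S_k(\omega)|/(4k)\to0$, and one extra uniformly bounded factor disposes of odd $n$. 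By \eqref{a16.1} this gives $L(0)=0$.

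Finally, continuity of $E\mapsto L(E)$ follows from Theorem~\ref{a16}(ii): for any $E_0\in\bbR$ we have $\#\mathrm{Fix}(G_{\nu^{E_0}})=0\leq1$; if $E_k\to E_0$ then $M^{E_k}(a)\to M^{E_0}(a)$ for every $a\in\supp\wti\mu$, so $\nu^{E_k}\to\nu^{E_0}$ weakly, and all supports lie in a fixed compact subset of $\mathrm{SL}(2,\bbR)$ once the $E_k$ are in a bounded neighbourhood of $E_0$, so the convergence is also bounded; hence $L(E_k)=L(\nu^{E_k})\to L(\nu^{E_0})=L(E_0)$. The one point requiring genuine care — and the main obstacle — is that the Furstenberg machinery cannot reach $E=0$, because strong irreducibility really does fail there, so $L(0)=0$ has to be proved separately by the elementary random-walk argument above; by contrast, once the $E$-independent hyperbolic element $D=M^E(a)M^E(a')^{-1}$ has been identified, everything else is routine.
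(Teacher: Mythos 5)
Your proposal is correct and follows essentially the same route as the paper: the $E$-independent diagonal hyperbolic element $M_\alpha M_\beta^{-1}$ gives non-compactness, the contraction property, $\mathrm{Fix}(G_{\nu^E})=\emptyset$, and (for $E\neq 0$) strong irreducibility by reducing any finite invariant set to $\{\mathrm{span}\,e_1,\mathrm{span}\,e_2\}$; positivity and continuity then come from Theorem \ref{a16}, and $L(0)=0$ from the same pairing of consecutive transfer matrices plus the law of large numbers. Your explicit remark on why strong irreducibility fails at $E=0$ and your treatment of odd $n$ are slightly more detailed than the paper's, but the argument is the same.
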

\begin{proof}
For all $E\in\bbR$, $G_{\nu^E}$ contains at least $2$ distinct elements of the form
\begin{equation}\lb{a17}
M_{\alpha}:=\begin{bmatrix}
\frac{E}{\alpha}& -\frac{1}{\sqrt{\alpha}}\\
\sqrt{\alpha}&0
\end{bmatrix}.
\end{equation}
Let us pick arbitrary $\alpha\not=\beta$ and observe that the following sequence of matrices is unbounded and belongs to $G_{\nu^E}$
\begin{equation}\lb{a16.2}
A_n:=(M_{\alpha}M_{\beta}^{-1})^n=
\begin{bmatrix}
\alpha^{-\frac n2}\beta^{\frac n2}&0\\
0&\alpha^{\frac n2}\beta^{-\frac n2}\\
\end{bmatrix}, n\in\bbZ^+.
\end{equation}
Hence, $G_{\nu^E}$ is not compact.

Let us fix $E\not=0$. In order to prove that $G_{\nu^E}$ is strongly irreducible, we observe that for any $V\in\bbR\bbP^1$, one has
\begin{equation}\lb{a19}
\lim\limits_{n\rightarrow\pm\infty}A_nV\in\text{span}\{e_1\}\cup\text{span}\{e_2\},
\end{equation}
where $e_1=(1,0)^{\top}$, $e_2=(0,1)^{\top}$. Therefore, every finite subset $\cF\subset\bbR\bbP^1$ which is invariant under $G_{\nu^E}$ must be equal to one of the following sets:
\begin{equation}\lb{a21}
V_1:=\text{span}\{e_1\},\ V_2:=\text{span}\{e_2\},\ V_1\cup V_2.
\end{equation}
However, none of these sets is invariant when $E\not=0$. Indeed, in this case one has
\begin{equation}
M_{\alpha} V_1 \not\in \{ V_1,  V_2 \}, \quad M_{\alpha}^{-1} V_2 \not\in \{ V_1,  V_2 \}.
\end{equation}

To show \eqref{a20}, let us assume that there exists $V\in\text{Fix}(G_{\nu^E})$. Then $A_nV=V$ for each $n\in\bbN$, hence, using \eqref{a19} one infers that either $V=V_1$ or $V=V_2$. On the other hand, neither of $V_1, V_2$ belongs to $\text{Fix}(G_{\nu^E})$, since $M_\alpha V_k\not=V_k$ for $k\in{1,2}$.

By Theorem \ref{a16}, the Lyapunov exponent is continuous everywhere and positive at nonzero energies.

Next, we prove that  $L(0)=0$. To this end we explicitly compute the limit in \eqref{a16.1}. One has
\begin{align}
\begin{split}
\lb{a25}
\cM^0_{2n}(\omega)=\prod_{i=2n-1}^{0}\cM^0(T^i\omega)&=\prod_{i=n}^{1}\cM^0(T^{2i-1}\omega)\cM^0(T^{2i-2}\omega)\\
&=\begin{bmatrix}
\left(\prod_{i=n}^{1}\frac {\omega_{2i-1}}{\omega_{2i}}\right)^{1/2}&0\\
0&\left(\prod_{i=n}^{1}\frac {\omega_{2i}}{\omega_{2i-1}}\right)^{1/2}
\end{bmatrix}.
\end{split}
\end{align}
Denoting
\begin{equation}\lb{a27}
\xi_i:=\frac12\log(\omega_{2i-1}\omega_{2i}^{-1}),
\end{equation}
we notice that $\{\xi_i\}_{i=1}^{\infty}$ is a sequence of i.i.d. random variables satisfying $\int_{\cA}\xi_id\wti\mu=0$ for all $i\in\bbN$. By the law of large numbers, one has
\begin{equation}\lb{a26}
\frac{1}{n}\sum_{i=1}^{n}\xi_i\rightarrow0,\text{\ as\ }n\rightarrow \infty, \text{\ almost surely.}
\end{equation}
Combining \eqref{a25}, \eqref{a27}, and \eqref{a26}, one infers
\begin{equation}\no
L(0)=\lim\limits_{n\rightarrow\infty}\frac{1}{2n}\log \|\cM_{2n}^0(\omega)\|= 0,
\end{equation}
for $\mu-$almost every $\omega\in\Omega$.

Finally, $G_{\nu_E}$ is contracting since the $A_n$ converge to a rank-one matrix as $n\rightarrow\infty$.
\end{proof}

\begin{theorem}\lb{a40}
Assume Hypothesis \ref{a3}, then there exists a full-measure set $\Omega_0$ such that
\begin{equation}\lb{a41}
\sigma(J_0(\omega))=[-2\sqrt{d_{\mu}},2\sqrt{d_{\mu}}],\ \text{for all \ }\omega\in\Omega_0,
\end{equation}
where $d_{\mu}:=\max(\supp\wti\mu)$.
\end{theorem}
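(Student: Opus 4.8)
The plan is to prove the two inclusions in \eqref{a41} separately: the inclusion $\sigma(J_0(\omega))\subseteq[-2\sqrt{d_\mu},2\sqrt{d_\mu}]$ for \emph{every} $\omega\in\Omega$, and the reverse inclusion on an explicit full-measure set $\Omega_0$.

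\emph{Upper bound.} For every $\omega\in\Omega$ one has $\omega_n\in\supp\widetilde\mu$, hence $\omega_n\le d_\mu$ for all $n$. Writing $J_0(\omega)=L(\omega)+L(\omega)^*$, where $L(\omega)$ is the weighted forward shift $L(\omega)\delta_n=\sqrt{\omega_n}\,\delta_{n+1}$ on $\ell^2(\mathbb{Z}^+)$ — a decomposition that matches \eqref{a7} — one computes $L(\omega)^*L(\omega)=\diag(\omega_n)$, so that $\|L(\omega)\|=\sup_n\sqrt{\omega_n}\le\sqrt{d_\mu}$ and therefore $\|J_0(\omega)\|\le 2\sqrt{d_\mu}$. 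Consequently $\sigma(J_0(\omega))\subseteq[-2\sqrt{d_\mu},2\sqrt{d_\mu}]$ for every $\omega\in\Omega$.

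\emph{Lower bound.} Since $\supp\widetilde\mu$ is finite, $d_\mu\in\supp\widetilde\mu$, hence $p:=\widetilde\mu(\{d_\mu\})>0$. For fixed $m\in\mathbb{N}$ the events $\{\omega_{(m+1)j}=\dots=\omega_{(m+1)j+m}=d_\mu\}$, $j\in\mathbb{Z}^+$, involve pairwise disjoint blocks of coordinates, are therefore independent, and each has probability $p^{m+1}>0$; by the second Borel--Cantelli lemma almost surely infinitely many of them occur. Intersecting over $m\in\mathbb{N}$, let $\Omega_0$ be the full-measure set of those $\omega$ for which, for every $m$, the block $\omega_j=\dots=\omega_{j+m}=d_\mu$ occurs for infinitely many $j$; note $\Omega_0\subseteq\Omega$, so the upper bound holds throughout $\Omega_0$. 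Now fix $\omega\in\Omega_0$ and $E\in(-2\sqrt{d_\mu},2\sqrt{d_\mu})$, and write $E=2\sqrt{d_\mu}\cos k$ with $k\in(0,\pi)$. Along any block $\omega_j=\dots=\omega_{j+L}=d_\mu$ the operator $J_0(\omega)$ acts on the corresponding run of interior sites exactly as the free Jacobi matrix with constant weights $\sqrt{d_\mu}$, which annihilates the plane wave $n\mapsto e^{ikn}$ because $\sqrt{d_\mu}\,(e^{ik}+e^{-ik})=E$. Multiplying the plane wave by a trapezoidal cut-off $\phi$ supported in the block (rising linearly from $0$ to $1$ over the first third, constant $1$ on the middle third, decaying symmetrically) and normalizing gives $\psi^{(L)}\in\ell^2(\mathbb{Z}^+)$ with $\|\psi^{(L)}\|=1$, $\|\psi^{(L)}\|_\infty=O(L^{-1/2})$, and $(J_0(\omega)-E)\psi^{(L)}$ supported only where $\phi$ is non-constant with pointwise size $O(L^{-3/2})$ there, so that $\|(J_0(\omega)-E)\psi^{(L)}\|\to 0$ as $L\to\infty$. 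Since $\omega\in\Omega_0$ supplies blocks of arbitrarily large length, $E\in\sigma(J_0(\omega))$; as this holds for all $E$ in the open band and the spectrum is closed, $[-2\sqrt{d_\mu},2\sqrt{d_\mu}]\subseteq\sigma(J_0(\omega))$ for every $\omega\in\Omega_0$. Together with the upper bound this gives \eqref{a41}.

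The only step carrying genuine content is the Weyl-sequence estimate, but it is entirely routine once the free block is identified; everything else is the elementary operator-norm bound and a Borel--Cantelli argument. I expect no real obstacle. (Alternatively, the lower inclusion follows from the standard principle that the spectrum of an ergodic Jacobi matrix contains the spectrum of every operator arising as a limit of translates — here the constant-$\sqrt{d_\mu}$ matrix — which reduces to the same block construction.)
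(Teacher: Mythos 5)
Your proof is correct and follows essentially the same route as the paper: a norm bound for the inclusion $\sigma(J_0(\omega))\subseteq[-2\sqrt{d_\mu},2\sqrt{d_\mu}]$, and a Weyl-sequence (plane-wave) argument on almost-surely occurring arbitrarily long blocks where the weights equal $d_\mu$, the paper differing only in cosmetic details (it cites Kirsch for the full-measure set of configurations with long blocks where $|\sqrt{\omega_j}-\sqrt{d_\mu}|<\delta$ rather than running Borel--Cantelli for exact blocks, and uses a sharp rather than trapezoidal cutoff). No gaps.
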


\begin{proof}
	Clearly, for every $\omega\in\Omega$ one has
	\begin{equation}\lb{a42}
	\sigma(J_0(\omega))\subset[-2\sqrt{d_{\mu}},2\sqrt{d_{\mu}}].
	\end{equation}
	Next, we prove that the opposite inclusion holds for $\mu-$a.e. $\omega\in \Omega$. To this end, let us consider the full-measure set  (cf., e.g., \cite[Proposition 3.8]{Ki08})
	\begin{equation}\lb{a43}
	\Omega_{0}:=\bigcap_{\delta>0}\bigcap_{R\in\bbZ^+}\bigcup_{k\in\bbZ^+}\{\omega\in\Omega: |\sqrt{\omega_j}-\sqrt{d_{\mu}}|<\delta\text{\ for all\ }k\leq j<k+R\}.
	\end{equation}
	Pick an arbitrary $E\in[-2\sqrt{d_{\mu}},2\sqrt{d_{\mu}}]$ and define $\theta = \theta(E) \in [0,\pi]$ by
	\begin{equation}\no
	E=\sqrt{d_{\mu}}(e^{\bfi \theta}+e^{-\bfi \theta}).
	\end{equation}
	For every $\omega\in\Omega_0$ we can find $k_l\in\bbZ^+$, $R_l\rightarrow\infty$, $\delta_l\rightarrow0$ as $l\rightarrow\infty$ such that
	\begin{equation}\no
	|\sqrt{\omega_j}-\sqrt{d_{\mu}}|<\delta_l\text{\ for all\ } k_l\leq j< k_l + R_l,\ l\in\bbN.
	\end{equation}
	The inclusion $E\in\sigma(J_0(\omega))$ follows from the Weyl Criterion provided
	\begin{equation}\lb{a45}
	\|(J_0(\omega)-E)\psi_l\|\rightarrow0,\ \ell\rightarrow\infty.
	\end{equation}
	where, for each $l\in\bbN$,
	\begin{equation}\lb{a44}
	\psi_{l}(j):=\begin{cases}
	R_l^{-1/2}e^{\bfi j\theta},\ &\text{for all\ }k_l\leq j< k_l + R_l,\\
	0,& \text{otherwise.}
	\end{cases}
	\end{equation}
	To prove \eqref{a45} let us notice that
	\begin{align}\no
	((J_0(\omega)-E)\psi_l)_j=\begin{cases}
	R_l^{-1/2}\omega_{j}^{\frac12}e^{\bfi (j+1)\theta},\hspace{1.78cm}\text{\ if\ } j=k_l-1,\\
	R_l^{-1/2}(\omega_{j}^{\frac12}e^{\bfi (j+1)\theta}-Ee^{\bfi j\theta}),\hspace{0.25cm}\text{\ if\ } j= k_l,\\
	%\sqrt{\omega_{k_l}}\psi(k_l+1),&j=k_l,\\
	R_l^{-1/2}e^{\bfi j\theta}((\omega_{j-1}^{\frac12}-d_{\mu}^{\frac{1}{2}})e^{-\bfi\theta}+(\omega_{j}^{\frac12}-d_{\mu}^{\frac12})e^{\bfi\theta}),\\
	\hspace{4.45cm}\text{\ if\ }k_l<j<k_l+R_l-1,\\
	R_l^{-1/2}(\omega_{j-1}^{\frac12}e^{\bfi (j-1)\theta}-Ee^{\bfi j\theta}),\text{\ if\ }j=k_l+R_l-1,\\
	%&j=k_l+R_l,\\
	R_l^{-1/2}\omega_{j-1}^{\frac12}e^{\bfi (j-1)\theta},\hspace{1.53cm}\text{\ if\ }j=k_l+R_l,\\
	0, \hspace{4.24cm}\text{otherwise}.
	\end{cases}
	\end{align}
	Hence,
	\begin{equation}\no
	\|(J_0(\omega)-E)\psi_l\|^2\lesssim_{d_{\mu}, E} R_l^{-1} + R_l^{-1}\sum_{j=k_l+1}^{k_l+R_l-2}(\omega_{j-1}^{\frac12}-d_{\mu}^{\frac{1}{2}})^2\underset{l\rightarrow\infty}{=}o(1).
	\end{equation}	
%To prove \eqref{a45} let us notice that
%	\begin{align}\no
%	((J_0(\omega)-E)\psi_l)_j=\begin{cases}
%	R_l^{-1/2}(\omega_{j}^{\frac12}e^{\bfi (j+1)\theta}-Ee^{\bfi j\theta}),\hspace{0.25cm}\text{\ if\ } j\in\{k_l-1, k_l\},\\
%	%\sqrt{\omega_{k_l}}\psi(k_l+1),&j=k_l,\\
%	R_l^{-1/2}e^{\bfi j\theta}((\omega_{j-1}^{\frac12}-d_{\mu}^{\frac{1}{2}})e^{-\bfi\theta}+(\omega_{j}^{\frac12}-d_{\mu}^{\frac12})e^{\bfi\theta}),\\
%	\hspace{4.45cm}\text{\ if\ }k_l<j<k_l+R_l-1,\\
%	R_l^{-1/2}(\omega_{j-1}^{\frac12}e^{\bfi (j-1)\theta}-Ee^{\bfi j\theta}),\text{\ if\ }j\in\{k_l+R_l-1, k_l+R_l\},\\
%	%&j=k_l+R_l,\\
%	0, \hspace{4.24cm}\text{otherwise}.
%	\end{cases}
%	\end{align}
%	Hence,
%	\begin{equation}\no
%	\|(J_0(\omega)-E)\psi_l\|^2\lesssim_{d_{\mu}, E} R_l^{-1} + R_l^{-1}\sum_{j=k_l+1}^{k_l+R_l-2}(\omega_{j-1}^{\frac12}-d_{\mu}^{\frac{1}{2}})^2\underset{l\rightarrow\infty}{=}o(1).
%	\end{equation}
\end{proof}
%It is convenient to write \eqref{a12} in terms of
%\begin{equation}\lb{a11}
%\begin{bmatrix}
%u_{n+1}\\u_n
%\end{bmatrix}=
%M^E(T^{n-1}\omega)
%\begin{bmatrix}
%u_{n}\\u_{n-1}
%\end{bmatrix},\
%M^E(\omega):=
%\begin{bmatrix}
%\frac{E}{\sqrt{\omega_1}}& -\frac{\sqrt{\omega_{0}}}{\sqrt{\omega_1}}\\
%\sqrt{\omega_1}&0
%\end{bmatrix},
%n\geq 1.
%\end{equation}

Next we focus on proving Anderson localization for $J_0(\omega)$. Our proof closely follows the main line of arguments from  \cite{BuDaFi}. Let $\cG(J_0(\omega))$ denote the set of energies for which \eqref{a30}  admits non-trivial solutions satisfying
\begin{equation}\lb{b27}
|u(n)|\leq C_u(1+n),\ C_u>0, n\in\bbZ^+.
\end{equation}
A key ingredient of the proof of Anderson localization is the following theorem.

\begin{theorem}\lb{a50}
There exists a full-measure set $\Omega_1\subset\Omega$, such that for all $\omega\in\Omega_1$ and every $E\in\cG(J_0(\omega))$, one has
\begin{equation}\lb{a51}
\lim\limits_{n\rightarrow\infty}\frac{1}{n}\log \|\cM_n^E(\omega)\|=L(E).
\end{equation}
\end{theorem}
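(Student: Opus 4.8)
The plan is to follow the strategy of \cite{BuDaFi}, which establishes that for a.e.\ $\omega$ the transfer matrix norms along \emph{every} polynomially bounded solution realize the Lyapunov exponent, not merely Lebesgue-a.e.\ or spectrally-a.e.\ energy. The engine is the large deviations estimate for the cocycle $(T,\cM^E)$ together with the continuity of $L(\cdot)$ and a Fubini/Borel--Cantelli argument that upgrades an ``a.e.\ $E$, a.e.\ $\omega$'' statement to an ``a.e.\ $\omega$, all $E\in\cG(J_0(\omega))$'' statement. The key structural inputs are all in place: Theorem~\ref{3.3} gives that $G_{\nu^E}$ is non-compact, strongly irreducible and contracting with $\mathrm{Fix}(G_{\nu^E})=\emptyset$ for every $E\in\bbR$, $L$ is continuous everywhere and positive off $E=0$; and the matrix entries of $\cM^E(\omega)$ depend polynomially on $E$ and are bounded uniformly in $\omega$ on compact energy intervals.

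First I would establish a uniform large deviations bound: for each compact interval $I\subset\bbR$ there are constants $C,\eta>0$ and for each $\e>0$ an $N(\e)$ so that
\[
\mu\Bigl(\Bigl\{\omega:\Bigl|\tfrac1n\log\|\cM_n^E(\omega)\|-L(E)\Bigr|>\e\Bigr\}\Bigr)\le Ce^{-\eta n},\qquad n\ge N(\e),\ E\in I,
\]
with $N(\e)$ independent of $E\in I$. This follows from the i.i.d.\ structure and the (uniform) positivity/irreducibility on $I$ exactly as in \cite{BuDaFi}; the uniformity in $E$ uses continuity of $L$ plus an elementary equicontinuity estimate $\|\cM_n^E-\cM_n^{E'}\|\le C_I^n|E-E'|$ on $I$, so that the bound at finitely many energies covering $I$ at scale $e^{-C_I n}$ propagates to all of $I$. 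Next, by Fubini applied to this bound one gets a full-measure set $\Omega_1\subset\Omega$ and, for each $\omega\in\Omega_1$, a full-Lebesgue-measure set of energies on which \eqref{a51} holds; the improvement to \emph{all} generalized eigenvalues $E\in\cG(J_0(\omega))$ is the heart of the matter. Here one uses the Schnol/Berezanskii-type observation that if $E\in\cG(J_0(\omega))$ then the solution $u$ of \eqref{a30} is polynomially bounded, hence $\|\cM_n^E(\omega)\|$ cannot grow faster than polynomially along a subsequence unless a second, linearly independent solution does; combining this with the deterministic Ruelle--Oseledets upper bound $\limsup_n \frac1n\log\|\cM_n^E(\omega)\|\le L(E)$ (valid for a.e.\ $\omega$, all $E$, from Kingman applied along a countable dense set and monotonicity) reduces matters to ruling out $\liminf_n\frac1n\log\|\cM_n^E(\omega)\|<L(E)$ along such sparse ``bad'' energies. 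This last exclusion is done by the \cite{BuDaFi} covering argument: the set of $\omega$ for which some $E\in\cG(J_0(\omega))$ is bad can be covered, at scale $\d$, by $O(e^{C_I n \d}\cdot e^{-\eta n})$-measure sets (one discretizes $E\in I$ into $\sim e^{C_I n}$ boxes, but the polynomial-boundedness constraint forces the bad $E$ to lie in exponentially few of them), and choosing $\d$ small makes the exponent negative, so Borel--Cantelli closes the argument.

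The main obstacle I anticipate is precisely this upgrade step — passing from ``a.e.\ energy'' to ``every generalized eigenvalue'' — because the set $\cG(J_0(\omega))$ is both $\omega$-dependent and typically of zero Lebesgue measure, so a naive Fubini is useless. The resolution, as in \cite{BuDaFi}, is to exploit that along a genuine generalized eigenfunction the transfer matrices are \emph{polynomially} bounded in one direction, which is an exponentially rigid constraint; marrying this with the uniform-in-$E$ large deviation estimate and the Lipschitz-in-$E$ control $\|\cM_n^E-\cM_n^{E'}\|\le C_I^n|E-E'|$ of the cocycle is what makes the counting work. A secondary technical point is verifying the integrability $\int\log\|\cM^E(\omega)\|\,d\mu<\infty$ and the uniform boundedness of $\|\cM^E(\omega)^{\pm1}\|$ needed to invoke Theorem~\ref{a16} and the large deviations machinery, but this is immediate since $\omega_n\in\{2,\dots,d\}$ forces all entries of $\cM^E(\omega)$ and its inverse to lie in a fixed compact set for $E$ in a compact interval.
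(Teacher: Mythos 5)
Your overall strategy is the right one and matches the paper's (which follows \cite{BuDaFi}): a large deviation estimate uniform on compact energy intervals, an elimination of double resonances closed by Borel--Cantelli, with the polynomially bounded generalized eigenfunction forcing a large finite-volume Green's function near the origin. However, two of your steps have genuine gaps. First, the uniform large deviation bound you posit ``for each compact interval $I\subset\bbR$'' with $E$-independent constants cannot hold if $0\in I$: by Theorem \ref{3.3} strong irreducibility fails at $E=0$ (the pair $\mathrm{span}\{e_1\},\mathrm{span}\{e_2\}$ is invariant there) and $L(0)=0$, so both positivity and the LDT constants degenerate as $E\to 0$. The paper excises a neighborhood of zero, works on $I_l=[-2\sqrt d,2\sqrt d]\setminus(-1/l,1/l)$, proves the claim for each fixed $l$ (Theorem \ref{b16.1}), and obtains Theorem \ref{a50} by intersecting the full-measure sets over $l$; the single energy $E=0$ is covered separately by the explicit diagonal computation \eqref{a25}--\eqref{a26} giving $\frac1n\log\|\cM_n^0(\omega)\|\to 0=L(0)$ a.s. Your argument needs this excision; as written the uniformity claim is false.

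Second, your counting step misattributes the mechanism. In the measure estimate for the double-resonance set (Theorem \ref{b19}), the reason the bad energies occupy exponentially few boxes has nothing to do with polynomial boundedness of generalized eigenfunctions: it is that the finite-volume resolvent $G^E_{\omega,[0,n]}$ exceeds $e^{K^2}$ only for $E$ in $O(K^9)$ intervals of width $\sim e^{-K^2}$ (finitely many poles), and one then multiplies by the LDT probability for the \emph{independent} coordinate window $[K^{10},\overline K]$ -- this independence of the two windows is essential and absent from your sketch. Polynomial boundedness enters only in the deterministic half: for $\omega$ outside the bad set, a generalized eigenvalue with $\liminf_n F_n<L(E)-\varepsilon$ forces $|u(\ell)|\le e^{-2K^2}$ at the midpoint of a good window (Steps 1--2 of the paper's proof), hence, since $u(0)=1$, $\|G^E_{\omega,[0,\ell-1]}\|\ge e^{K^2}$, i.e.\ a double resonance. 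You also omit the avalanche-principle step needed to convert largeness of block transfer matrices starting at $r\ge K^{10}$ into the lower bound for $\cM_m^E(\omega)$ itself, and your justification of the uniform upper bound via ``Kingman along a countable dense set and monotonicity'' does not work, since $\|\cM_n^E\|$ is not monotone in $E$; that upper bound is precisely what the uniform estimate \eqref{b14} supplies.
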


Let \begin{equation}
I_l:=[-2\sqrt{d},2\sqrt{d}]\setminus(-1/l,1/l),\ l>1.
\end{equation}
Then Theorem \ref{a50} follows from a slightly weaker result, Theorem \ref{b16.1}, upon taking the intersection of all  $l$-dependent full measure sets from Theorem \ref{b16.1}. Hence, we proceed by discussing the latter theorem. The first item in the program is positivity and continuity of the Lyapunov exponent. By Theorem \ref{3.3}, the subgroup $G_{\nu^E}$ is noncompact, strongly irreducible, contracting, and $L(E)>0$ for all $E\in I_l$,\ $l>1$.

Thus, the results of \cite{BuDaFi} concerning the products of i.i.d. random SL$(2,\bbR)$ matrices are applicable in the present setting. To record these results let us introduce some notation. For a given $n\in\bbN$, let
\begin{equation}\lb{a52}
F_n(\omega,E):=\frac1n\log\|\cM^E_n(\omega)\|,
\end{equation}
%and let $P_n:\ell^2(\bbZ^+)\rightarrow\ell^2\{0,...,n-1\}$ denote the canonical restriction operator. Then %finite volume restriction of $J_0(\omega)$ is defined by the formula
\begin{equation}
J_0^n(\omega):=J_0(\omega){\upharpoonright_{[1,n]}}.
\end{equation}
For any $E\not\in\sigma(J_0^n(\omega))$, we set
\begin{equation}
G_{\omega,n}^E:=(J_0^n(\omega)-E)^{-1}\text{\ and\ }G_{\omega,n}^E(j,k):=\langle\delta_j, G_{\omega,n}^E\delta_k\rangle,\ j,k\in[1,n].
\end{equation}

\begin{theorem}\lb{b1}
Fix arbitrary $\varepsilon\in(0,1)$, $\zeta\in\bbZ^+$, and $l>1$. Then there exists a subset $\Omega_+(\varepsilon)\subset \Omega$ of full $\mu$-measure such that for all $\omega\in\Omega_+({\varepsilon})$, there exist $n_0=n_0(\omega,\varepsilon, l)$, $n_1=n_1(\omega,\varepsilon, l)$ so that the following statements hold.

$($i$)$ \cite[Proposition 5.2]{BuDaFi} For all   $n\geq \max(n_0, (\log(|\zeta|+1))^{2/3})$, and $E\in I_l$ one has
\begin{equation}\lb{a53}
\left|L(E)-\frac{1}{n^2}\sum_{s=0}^{n^2-1}F_n\left(T^{\zeta+sn}\omega, E\right)\right|<\varepsilon,
\end{equation}

$($ii$)$   \cite[Corollary 5.3]{BuDaFi} For all   $n\geq \max(n_1,\log^2(|\zeta|+1))$,  and $E\in I_l$ one has
\begin{equation}\lb{b14}
\frac{1}{n}\|\log\cM_n^E(T^{\zeta}\omega) \|\leq L(E)+2\varepsilon.
\end{equation}

$($iii$)$ For all $n\geq \varepsilon^{-1}\max(n_1,2\log^2(|\zeta|+1))$,  $E\in I_{l}\setminus \sigma(J_0(T^{\zeta}\omega))$, and $1\leq j,k\leq n$, one has
\begin{equation}\lb{b2}
\left|G^E_{T^{\zeta}\omega,n}(j,k)\right|\lesssim_{d} \frac{\exp[(n-|j-k|)L(E)+C_0\varepsilon n]}{|\det(J^n_0(T^{\zeta}\omega)-E)|}\prod_{i=0}^{n}\sqrt{\omega_{i+\zeta}},
\end{equation}
where $C_0=C_0(\wti\mu)>0$.
\end{theorem}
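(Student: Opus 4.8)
The plan is to obtain (i) and (ii) by transcribing the corresponding results of \cite{BuDaFi} to the cocycle at hand, and to deduce the Green's function bound (iii) from (ii) together with Cramer's rule and the standard dictionary between truncated Jacobi determinants and transfer matrices. For (i) and (ii) there is nothing to do beyond verifying the hypotheses under which those references operate: the maps $\cM^E\colon\Omega\to\mathrm{SL}(2,\bbR)$, $E\in I_l$, form an i.i.d.\ cocycle over the Bernoulli shift $T$, the dependence $E\mapsto\cM^E(\omega)$ is continuous, and $C_d:=\sup_{E\in I_l,\,\omega}\|\cM^E(\omega)\|<\infty$ because $\omega$ ranges over the finite set $\{2,\dots,d\}$; by Theorem \ref{3.3} the group $G_{\nu^E}$ is noncompact, strongly irreducible and contracting with $\mathrm{Fix}(G_{\nu^E})=\emptyset$, $L$ is continuous, and $L>0$ on the compact set $I_l$, so $\inf_{I_l}L>0$. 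Thus (i) is \cite[Proposition 5.2]{BuDaFi} and (ii) is \cite[Corollary 5.3]{BuDaFi}, with $\Omega_+(\varepsilon)$, $n_0$, $n_1$ taken from those statements; in the proof of (iii) below we will enlarge $n_1$ by an amount depending only on $\varepsilon$, which only strengthens (ii).

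For (iii) write $\omega':=T^\zeta\omega$. By symmetry of $G^E_{\omega',n}$ and of the right-hand side of \eqref{b2} in $j$ and $k$ we may assume $j\le k$, and we may assume $E\notin\sigma(J_0^n(\omega'))$ (otherwise the right-hand side of \eqref{b2} is $+\infty$). The matrix $E-J_0^n(\omega')$ is symmetric and tridiagonal on $\{1,\dots,n\}$, with diagonal $E$ and off-diagonal entries $-\sqrt{\omega'_i}$, so a cofactor computation gives Cramer's formula
\begin{equation}\no
G^E_{\omega',n}(j,k)=\frac{\big(\prod_{i=j}^{k-1}\sqrt{\omega'_i}\big)\,\det\!\big(E-J_0^{[1,j-1]}(\omega')\big)\,\det\!\big(E-J_0^{[k+1,n]}(\omega')\big)}{\det\!\big(E-J_0^n(\omega')\big)},
\end{equation}
where $J_0^{[a,b]}(\sigma):=J_0(\sigma){\upharpoonright_{[a,b]}}$ and the determinant of the empty matrix is $1$. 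Next, comparing the solution $p_\bullet$ of $\sqrt{\sigma_{i-1}}\,p_{i-1}+\sqrt{\sigma_i}\,p_{i+1}=E\,p_i$ ($i\ge1$), normalized by $p_0=0$ and $p_1=1$, with the transfer-matrix iteration \eqref{a12}, \eqref{a14}, and matching leading coefficients in $E$, one obtains for every admissible sequence $\sigma$ and every $m\ge0$
\begin{equation}\no
\det\!\big(E-J_0^{[1,m]}(\sigma)\big)=\Big(\textstyle\prod_{i=1}^{m}\sqrt{\sigma_i}\Big)\big(\cM_m^E(\sigma)\big)_{11}.
\end{equation}

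Applying this identity to the first truncated determinant, and to the second after relabeling sites so that $J_0^{[k+1,n]}(\omega')$ becomes $J_0^{[1,n-k]}(T^k\omega')$ with $T^k\omega'=T^{k+\zeta}\omega$, and using $|(\cM_m^E)_{11}|\le\|\cM_m^E\|$, the three $\sqrt{\omega'}$-products amalgamate into $\prod_{i\in\{1,\dots,n\}\setminus\{k\}}\sqrt{\omega'_i}=(\omega'_0\omega'_k)^{-1/2}\prod_{i=0}^n\sqrt{\omega'_i}\le\tfrac12\prod_{i=0}^n\sqrt{\omega_{i+\zeta}}$ (since $\omega'_0,\omega'_k\ge2$), and we are left with
\begin{equation}\no
\big|G^E_{T^\zeta\omega,n}(j,k)\big|\le\frac{\|\cM_{j-1}^E(T^\zeta\omega)\|\,\|\cM_{n-k}^E(T^{k+\zeta}\omega)\|}{\big|\det(J_0^n(T^\zeta\omega)-E)\big|}\prod_{i=0}^n\sqrt{\omega_{i+\zeta}}.
\end{equation}
It remains to bound the two transfer-matrix norms. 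If $j-1\ge\max(n_1,\log^2(|\zeta|+1))$, apply (ii) with block length $j-1$ in place of $n$ to get $\log\|\cM_{j-1}^E(T^\zeta\omega)\|\le(j-1)(L(E)+2\varepsilon)$; otherwise $j-1<\varepsilon n$, because $n\ge\varepsilon^{-1}\max(n_1,2\log^2(|\zeta|+1))$, and the trivial bound $\|\cM_m^E(\sigma)\|\le C_d^{\,m}$ gives $\log\|\cM_{j-1}^E(T^\zeta\omega)\|\le(\log C_d)\,\varepsilon n$. Since $L(E)>0$ on $I_l$, in either case $\log\|\cM_{j-1}^E(T^\zeta\omega)\|\le(j-1)L(E)+C_1\varepsilon n$ with $C_1=C_1(d)$; the same dichotomy, with (ii) invoked at the shift $k+\zeta$ and block length $n-k$, gives $\log\|\cM_{n-k}^E(T^{k+\zeta}\omega)\|\le(n-k)L(E)+C_1\varepsilon n$. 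Multiplying and using $(j-1)+(n-k)=n-1-|j-k|\le n-|j-k|$ together with $L(E)\ge0$ produces the exponent $(n-|j-k|)L(E)+C_0\varepsilon n$ with $C_0=2C_1$, which is \eqref{b2}.

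The one estimate requiring care is the short-segment alternative for the right block $[k+1,n]$: there the relevant shift $k+\zeta$ grows with $k$, so the threshold reads $\log^2(|k+\zeta|+1)\le\log^2(n+\zeta+1)$, which must nonetheless be $O(\varepsilon n)$ for the trivial transfer-matrix bound to contribute only $O(\varepsilon n)$ to the exponent. Since $\log^2(n+\zeta+1)\lesssim\log^2(n+1)+\log^2(\zeta+1)$, with $\log^2(\zeta+1)\le\tfrac12\varepsilon n$ by hypothesis and $\log^2(n+1)=o(n)$, this is secured once $n_1$ has been enlarged (by a quantity depending only on $\varepsilon$) so that $\varepsilon n\ge2\log^2 n$ for all $n\ge\varepsilon^{-1}n_1$; such an enlargement is harmless. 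This is the only delicate point---the rest is routine bookkeeping with Cramer's rule and the determinant--transfer-matrix dictionary.
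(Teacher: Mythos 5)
Your proposal is correct and follows essentially the same route as the paper: items (i) and (ii) are imported from \cite{BuDaFi} after verifying the F\"urstenberg-type hypotheses via Theorem \ref{3.3}, and (iii) is obtained from the Cramer-rule expression for the truncated Green's function combined with the identity expressing truncated determinants through the $(1,1)$ entries of transfer matrices --- precisely the identities \eqref{b4}--\eqref{b5} that the paper invokes --- followed by the upper large-deviation bound of item (ii). The long-block/short-block dichotomy you spell out, including the care with the growing shift $k+\zeta$ in the right block, is exactly the bookkeeping the paper delegates to ``following the proof of \cite[Corollary 5.3]{BuDaFi},'' so nothing essential is missing.
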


\begin{proof}
These results were obtained in \cite{BuDaFi} for Schr\"odinger operators on the whole line. Let us briefly point out a minor change in \eqref{b2}. The new term is the last factor in the right-hand side of \eqref{b2}. It  appears due to the following relations between the transfer matrices and the Green's functions of truncated Jacobi matrices, cf. \cite{JKS},
\begin{equation}\lb{b4}
\cM_n^E(\omega)=
\frac{1}{\prod_{i=0}^{n}\sqrt{\omega_i}}\cA(n)\begin{bmatrix}
\cP^E_n(\omega) & \sqrt{\omega_0}{\cP^E_{n-1}(T\omega)} \\
-\sqrt{\omega_n}\cP^E_{n-1}(\omega)&-\sqrt{\omega_n\omega_0}\cP^E_{n-1}(T\omega)
\end{bmatrix}\cA^{-1}(0),
\end{equation}
where
\begin{align}
&\cP^E_n(\omega):=\det([E-J_0(\omega)]_{\upharpoonright{[1,n]}}),\ n\geq1,\lb{c5}\\
& \cP^E_0(\omega):=0,\ \cP^E_{-1}(\omega):=1,\no\\
& \cA(n):=\diag\{1, \sqrt{\omega_n}\}, n\in\bbZ^+.\no
\end{align}
Moreover, one has
\begin{equation}\lb{b5}
G^E_{\omega,n}(j,k)=\frac{\cP_{j-1}^E(\omega)\cP_{n-k-1}^E(T^{k+1}\omega)}{\cP_{n}^E(\omega)}\prod_{k\leq i<j}\sqrt{\omega_{i}},
\end{equation}
where $1\leq j\leq k\leq n$ and the vacuous product that occurs for $j=k$ is defined to be equal to one. Using \eqref{b4}, \eqref{b5}, and \cite[(5.13)]{BuDaFi} and following the proof of \cite[Corollary 5.3]{BuDaFi}, one infers \eqref{b2}.
\end{proof}

A crucial element of the proof of Anderson localization is the elimination of double resonances. We recall it in the following theorem.

\begin{theorem}\lb{b19} \cite[Proposition 6.1]{BuDaFi}
Let us fix arbitrary $\varepsilon>0$, $N\in\bbN$, and define
\begin{equation}\lb{b12}
\cD_N(\varepsilon,l):= \left\{
\omega\in\Omega\Bigg|\begin{aligned}
&\|G^E_{\omega, [0,n]}\|\geq e^{K^2} \text{\ and\ }\  |F_n(T^{r}\omega,E)|\leq L(E)-\varepsilon,\\
&\text{\ for some\  }K\geq N,\ 0\leq n\leq K^9, \, E\in I_l,\ \\
&\hspace{1.6cm}K^{10}\leq r\leq \overline{K}:= \lfloor K^{\log K} \rfloor
\end{aligned}
\right\}
\end{equation}
Then there exist $C>0,\eta>0$ that do not depend on $N$ such that
\begin{equation}\lb{b15}
\mu(\cD_N(\varepsilon,l))\leq Ce^{-\eta N}.
\end{equation}
In particular,
\begin{equation}\lb{b16}
\Omega_-(\varepsilon):=\Omega\setminus\limsup\limits_{N\rightarrow\infty}\cD_N(\varepsilon,l),
\end{equation}
is a full-measure set.
\end{theorem}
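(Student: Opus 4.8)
The statement coincides with \cite[Proposition~6.1]{BuDaFi}, so the plan is to transcribe that argument to the present half-line Jacobi setting; the only substantive change is carrying along the deterministic factors $\prod_i\sqrt{\omega_i}\le d^{n/2}$ that already appeared in \eqref{b4}--\eqref{b5}, while the hypotheses needed to run the matrix-product machinery uniformly on each slit interval $I_l$ — non-compactness, strong irreducibility, contraction, and $L(E)>0$ — are exactly what Theorem~\ref{3.3} supplies, and the quantitative estimates are Theorem~\ref{b1}. The decisive structural observation is that the two conditions defining $\cD_N(\varepsilon,l)$ are read off disjoint blocks of coordinates: $\|G^E_{\omega,[0,n]}\|$ depends only on $\omega_0,\dots,\omega_n$, whereas $F_n(T^r\omega,E)$ depends only on $\omega_{r+1},\dots,\omega_{r+n}$, and these blocks do not overlap since $r\ge K^{10}>K^9\ge n$. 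Consequently, for each fixed triple $(K,n,r)$ the two conditions are independent events, and all of the smallness of $\mu(\cD_N(\varepsilon,l))$ will come from the slow-growth condition $|F_n(T^r\omega,E)|\le L(E)-\varepsilon$.

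First I would reduce to a single scale by writing $\cD_N(\varepsilon,l)\subseteq\bigcup_{K\ge N}\bigcup_{n=0}^{K^9}\bigcup_{K^{10}\le r\le\overline{K}}\cD_{K,n,r}$, where $\cD_{K,n,r}$ is the event that both conditions hold for the given $(K,n,r)$ and some $E\in I_l$. The number of pairs $(n,r)$ attached to a given $K$ is at most $(K^9+1)\,K^{\log K}=e^{(1+o(1))(\log K)^2}=e^{o(K^2)}$, so it suffices to prove a bound $\mu\big(\bigcup_{n,r}\cD_{K,n,r}\big)\le e^{-\eta' K^2}$ with $\eta'=\eta'(\varepsilon,l)>0$; then $\sum_{K\ge N}e^{-\eta' K^2+o(K^2)}\le Ce^{-\eta N}$, which gives \eqref{b15} with $C,\eta$ independent of $N$ (the finitely many $N$ for which the resulting bound exceeds $1$ being covered by $\mu(\cD_N(\varepsilon,l))\le1$), and \eqref{b16} then follows from Borel--Cantelli. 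To estimate a fixed $\mu(\cD_{K,n,r})$ I would condition on $\cF_1:=\sigma(\omega_0,\dots,\omega_n)$. Conditionally, the random set $S_1:=\{E\in I_l:\|G^E_{\omega,[0,n]}\|\ge e^{K^2}\}$ is frozen, and since it is the $e^{-K^2}$-neighborhood of the (at most $n+1$) eigenvalues of $J_0(\omega)\upharpoonright_{[0,n]}$ it is a union of at most $n+1\le K^9+1$ intervals of length $\le2e^{-K^2}$; the function $E\mapsto F_n(T^r\omega,E)$ is independent of $\cF_1$ and, by $T$-invariance of $\mu$, has the law of $E\mapsto F_n(\omega,E)$. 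By Theorem~\ref{3.3} the large-deviation theorem for i.i.d.\ products in $\mathrm{SL}(2,\bbR)$ applies uniformly on $I_l$, producing $n_0=n_0(\varepsilon,l)$ and $\gamma=\gamma(\varepsilon,l)>0$ with $\mu(\{\omega:|F_n(\omega,E)|\le L(E)-\varepsilon\})\le e^{-\gamma n}$ for all $n\ge n_0$ and $E\in I_l$.

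The one delicate point is converting this pointwise-in-$E$ estimate into the supremum over $E\in I_l$ hidden in $\cD_{K,n,r}$: the deterministic Lipschitz bound $|F_n(\omega,E)-F_n(\omega,E')|\lesssim_{d,l}C^n|E-E'|$ has a constant that grows like $C^{K^9}$, which swamps the scale $e^{-K^2}$ of the intervals making up $S_1$, so a naive union bound over a fine energy grid is too lossy. The way out, exactly as in \cite{BuDaFi}, is to exploit that $E\mapsto\log\|\cM^E_n(\omega)\|$ extends to a subharmonic function of $E\in\bbC$ bounded above by $n\cdot\const(d,l)$ — equivalently, that $\cP^E_n$ in \eqref{b5} is a monic polynomial of degree $n$ — and to combine this with a Cartan/Bourgain-type estimate on its exceptional set; this upgrades the previous display to a genuinely uniform-in-$E$ statement: off an $\omega$-event of probability $\le e^{-\gamma'n}$, the slow-growth set $\{E\in I_l:F_n(T^r\omega,E)\le L(E)-\varepsilon\}$ has Lebesgue measure $\le e^{-\gamma'n}$, certified by only an $e^{O(K^2)}$-point grid. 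Granting this, conditioning on $\cF_1$ and using independence gives $\mu(\cD_{K,n,r})\le(n+1)\sup_{|I|\le2e^{-K^2}}\mu(\{\omega:I\cap\{E:|F_n(T^r\omega,E)|\le L(E)-\varepsilon\}\neq\emptyset\})\le e^{-\eta'' K^2}+(n+1)e^{-\gamma'n}$, the second term covering $n\ge n_0$ and the first the finitely many $n<n_0$ (for which large $K$ forces the short intervals of $S_1$ off the slow-growth set with overwhelming probability). Summing over the $e^{o(K^2)}$ pairs $(n,r)$ yields the required per-scale bound $\mu\big(\bigcup_{n,r}\cD_{K,n,r}\big)\le e^{-\eta' K^2}$.

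\textbf{Main obstacle.} The decoupling via disjoint coordinate blocks, the matrix-product large-deviation input, and the Borel--Cantelli step are routine once Theorems~\ref{3.3} and~\ref{b1} are available, and the passage from the line to the half-line is mere bookkeeping of the $\sqrt{\omega}$-factors in \eqref{b4}--\eqref{b5}. The genuine difficulty, precisely as in \cite{BuDaFi}, is the uniform-in-$E$ form of the large-deviation estimate in the third paragraph — extracting control over the whole interval $I_l$ from an estimate that is only pointwise in $E$ — which is what forces the subharmonicity of $E\mapsto\log\|\cM^E_n(\omega)\|$ and the accompanying Cartan-type exceptional-set bound into the proof.
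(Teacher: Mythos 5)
First, a remark on the comparison itself: the paper supplies no proof of this statement --- Theorem \ref{b19} is quoted verbatim from \cite[Proposition 6.1]{BuDaFi} --- so the only meaningful benchmark is the cited argument. Your skeleton agrees with it: the two defining conditions of $\cD_N(\varepsilon,l)$ are measurable with respect to disjoint coordinate blocks (since $r\ge K^{10}>K^9\ge n$), hence independent; the resolvent condition forces $E$ into the $e^{-K^2}$-neighborhood of the at most $n+1$ eigenvalues of $J_0(\omega)\upharpoonright_{[0,n]}$, which are measurable with respect to the first block; one then applies a large-deviation estimate to the second block and finishes with Borel--Cantelli. The bookkeeping of the $\sqrt{\omega_i}$-factors is indeed harmless, as you say.

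The quantitative core of your argument, however, does not close, for two reasons. (i) Your per-tuple bound $\mu(\cD_{K,n,r})\le e^{-\eta''K^2}+(n+1)e^{-\gamma' n}$ cannot produce the per-scale bound $e^{-\eta' K^2}$: for $n$ comparable to the fixed threshold $n_0(\varepsilon,l)$ the term $(n+1)e^{-\gamma' n}$ is a constant independent of $K$, and summing it over the roughly $K^{\log K}$ admissible values of $r$ gives something that grows rather than decays. (ii) The Cartan/subharmonicity step yields only a Lebesgue-measure bound on the slow-growth set $\{E\in I_l: F_n(T^r\omega,E)\le L(E)-\varepsilon\}$; a measure bound says nothing about whether that set meets the specific $e^{-K^2}$-windows around the eigenvalues of the $[0,n]$ block, which is the event you must exclude, so independence plus this estimate does not deliver your $e^{-\eta''K^2}$ term. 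Both difficulties come from reading \eqref{b12} with the transfer-matrix length tied to the Green's-function length $n\le K^9$. In \cite{BuDaFi}, and in the only way \eqref{b12} is actually invoked --- Step 3 of the proof of Theorem \ref{b16.1}, where one concludes $\frac1j\log\|\cM^E_j(T^r\omega)\|>L(E)-\varepsilon$ for $j\in\{K,2K\}$ --- the second condition involves $F_j$ with $j\in\{K,2K\}$, decoupled from $n$. With that reading the proof is elementary and needs no Cartan-type input: the pointwise large-deviation estimate applied at each ($\cF_1$-measurable) eigenvalue gives probability at most $e^{-\gamma K}$; the Lipschitz constant $e^{CK}$ of $F_j(\omega,\cdot)$ on $I_l$ is overwhelmed by the window width $e^{-K^2}$, so the estimate at the eigenvalue controls the whole window; and the entropy $e^{O((\log K)^2)}$ of the union over $(n,r,j)$ and the eigenvalue index is absorbed into $e^{-\gamma K/2}$, which sums over $K\ge N$ to $Ce^{-\eta N}$.
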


\begin{theorem}\lb{b16.1}
For every $l>1$,  $E\in\cG(J_0(\omega))\cap I_l$ and $\mu-$almost every $\omega\in\Omega$, one has
\begin{equation}\lb{b18}
\lim\limits_{n\rightarrow\infty}\frac{1}{n}\log \|\cM_n^E(\omega)\|=L(E).
\end{equation}
\end{theorem}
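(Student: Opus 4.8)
The plan is to follow the scheme of \cite[Section~6]{BuDaFi}, using the three inputs already assembled: the positivity and continuity of the Lyapunov exponent together with the fine properties of $G_{\nu^E}$ (Theorem~\ref{3.3}), the quantitative large-deviation and Green's-function estimates of Theorem~\ref{b1}, and the elimination of double resonances of Theorem~\ref{b19}. Fix $l>1$ and work on the full-measure set $\Omega_*:=\Omega_+(\varepsilon)\cap\Omega_-(\varepsilon)$ (intersected over a countable sequence $\varepsilon\to 0$); one argues pointwise for $\omega\in\Omega_*$ and $E\in\cG(J_0(\omega))\cap I_l$. Since $E$ is a generalized eigenvalue, fix the corresponding polynomially bounded solution $u$ of \eqref{a30}, normalized so that $\binom{u(1)}{\sqrt{\omega_0}u(0)}$ is a unit vector; then $\binom{u(n+1)}{\sqrt{\omega_n}u(n)}=\cM_n^E(\omega)\binom{u(1)}{\sqrt{\omega_0}u(0)}$, so the bound \eqref{b27} gives
\begin{equation}\no
\liminf_{n\to\infty}\tfrac1n\log\|\cM_n^E(\omega)\|\leq \limsup_{n\to\infty}\tfrac1n\log\big(C_u(1+n)\big)=0\leq L(E),
\end{equation}
and in fact the direction $\limsup_{n\to\infty}\frac1n\log\|\cM_n^E(\omega)\|\le L(E)$ along the full sequence will follow from item~(ii) of Theorem~\ref{b1} with $\zeta=0$ once $\varepsilon$ is sent to $0$. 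So the entire content is the reverse inequality
\begin{equation}\no
\liminf_{n\to\infty}\tfrac1n\log\|\cM_n^E(\omega)\|\ge L(E).
\end{equation}

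To get the lower bound, suppose toward a contradiction that $\frac1n\log\|\cM_n^E(\omega)\|\le L(E)-\varepsilon$ along some subsequence $n=n_j\to\infty$; one wants to derive that $u$ must then grow at least subexponentially—contradicting \eqref{b27}—or, in the cleaner formulation of \cite{BuDaFi}, one shows directly that at scales where the transfer matrix norm is too small the Green's function of the truncated operator $J_0^n(T^r\omega)$ must be large, placing $\omega$ in infinitely many of the double-resonance sets $\cD_N(\varepsilon,l)$, which is impossible on $\Omega_-(\varepsilon)$. Concretely: choose the length scale $K$ so that the ``bad'' block $[r,r+K^9]$ contains the index window where $F_n(T^r\omega,E)\le L(E)-\varepsilon$, use the a~priori polynomial bound on $u$ together with the solution-expansion via Green's functions (Poisson-type formula: on an interval $[a,b]$ not containing a pole of the resolvent, $u(j)$ is controlled by the values of $u$ at $a-1,b+1$ weighted by $G^E_{[a,b]}(j,\cdot)$ and the off-diagonal hopping $\sqrt{\omega_i}$'s) to force $\|G^E_{T^r\omega,[0,n]}\|\ge e^{K^2}$, and then invoke \eqref{b2} in the good regime together with \eqref{a53} to control the determinant $|\det(J_0^n(T^\zeta\omega)-E)|$ from below, so that the hypotheses defining $\cD_N(\varepsilon,l)$ are met. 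Letting $K\to\infty$ along the bad subsequence shows $\omega\in\limsup_N\cD_N(\varepsilon,l)$, contradicting $\omega\in\Omega_-(\varepsilon)$.

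The adaptations needed relative to \cite{BuDaFi} are bookkeeping: one has a half-line rather than a whole-line operator, so there is a boundary term at $n=0$ (handled exactly as in \eqref{a7}, \eqref{a30}, with $\cP_0^E=0$, $\cP_{-1}^E=1$), and the transfer cocycle is the modified one in \eqref{a12} with the extra $\sqrt{\omega}$ factors, which is why \eqref{b4}, \eqref{b5}, and hence \eqref{b2} carry the product $\prod_i\sqrt{\omega_i}$; since $\omega_i\in\{2,\dots,d\}$ this product lies between $2^{n/2}$ and $d^{n/2}$, so $\frac1n\log\prod_{i=0}^n\sqrt{\omega_{i+\zeta}}$ is bounded and can be absorbed into the $C_0\varepsilon n$ error at the cost of enlarging $C_0$ and shrinking $\varepsilon$—it does not affect the exponential bookkeeping. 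I expect the main obstacle to be precisely this last point: making sure that the extra geometric factors $\prod\sqrt{\omega_i}$ appearing in the Green's-function/transfer-matrix dictionary are genuinely harmless and cancel correctly between \eqref{b4} and \eqref{b5}, so that the two-sided estimates of \cite{BuDaFi} transfer verbatim with only the energy $0$ excluded (where $L(0)=0$ by Theorem~\ref{3.3} and the argument degenerates). Once that is checked, assembling the double-resonance contradiction is a faithful transcription of \cite[Section~6]{BuDaFi}.
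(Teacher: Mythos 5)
Your overall architecture is the right one (large deviations, the Green's function/transfer-matrix dictionary with the extra $\prod\sqrt{\omega_i}$ factors, and elimination of double resonances), and your observation that the geometric factors are harmless because $2\le\omega_i\le d$ matches the paper. But the core of the lower bound $\liminf\frac1n\log\|\cM_n^E(\omega)\|\ge L(E)-\varepsilon$ is not carried out correctly, for two reasons. First, your contradiction scheme needs to produce the \emph{second} resonance condition of \eqref{b12}, namely $F_n(T^r\omega,E)\le L(E)-\varepsilon$ for some $r\in[K^{10},\overline{K}]$, out of the hypothesis that $F_{n_j}(\omega,E)\le L(E)-\varepsilon$, i.e.\ out of smallness of the norm of the product started at $r=0$. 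That implication fails as stated: by submultiplicativity a small $\|\cM_{n_j}^E(\omega)\|$ is compatible with every shifted block $\cM_n^E(T^r\omega)$ having large norm (cancellation between blocks), so you cannot place $\omega$ in $\cD_N(\varepsilon,l)$ this way. The missing ingredient is the Avalanche Principle, which is exactly the tool that rules out such cancellation; it appears nowhere in your proposal and nothing replaces it. The paper runs the argument directly rather than by contradiction: the generalized eigenfunction supplies the \emph{first} resonance $\|G^E_{\omega,[0,p]}\|\ge e^{K^2}$ with $p\le K^9$ (Steps 1--3: find a good block $[a,b]$ of length $\sim K^3$ via \eqref{a53} and \eqref{b2}, deduce $|u(\ell)|\le e^{-2K^2}$ at its midpoint, then use $u(0)=1$ and the Poisson formula on $[0,\ell-1]$); since $\omega\notin\cD_i(\varepsilon,l)$ for large $i$, the second resonance cannot occur, so $\frac1j\log\|\cM_j^E(T^r\omega)\|>L(E)-\varepsilon$ for $j\in\{K,2K\}$ and all $r\in[K^{10},\overline{K}]$, and the Avalanche Principle then assembles these into $\frac1m\log\|\cM_m^E(\omega)\|\ge L(E)-\varepsilon$ for all $m\in[K^{11}+K^{10},\overline{K}]$, intervals which cover all large integers as $K$ grows.

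Second, there is a location error: you write that one forces $\|G^E_{T^r\omega,[0,n]}\|\ge e^{K^2}$, but the resonance condition in \eqref{b12} is on $G^E_{\omega,[0,n]}$, a window anchored at the origin where $u$ is normalized by $u(0)=1$; the whole point of the set $\cD_N$ is that the two resonances occur in well-separated windows ($n\le K^9$ versus $r\ge K^{10}$), which is what makes the measure estimate \eqref{b15} possible. A minor further slip: the polynomial bound \eqref{b27} controls $\|\cM_n^E(\omega)v\|$ only for the single direction $v$ determined by $u$, not the operator norm, so your displayed inequality $\liminf\frac1n\log\|\cM_n^E(\omega)\|\le 0$ is false whenever $L(E)>0$ (indeed $\|\cM_n^E\|\ge1$ since $\det\cM_n^E=1$); this is not load-bearing since the correct upper bound comes from \eqref{b14}, but it signals a confusion between the eigenvector direction and the full cocycle norm that is precisely what the theorem is about.
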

\begin{proof}
Let us fix arbitrary  $l>1$,  $E\in\cG(J_0(\omega))\cap I_l$, and let us define the full-measure set
\begin{equation}\lb{b17}
\Omega_*:=\Omega_0\bigcap_{m\in\bbN}\Omega_+(m^{-1})\cap\Omega_-(m^{-1}),
\end{equation}
where $\Omega_0, \Omega_-(\cdot), \Omega_+(\cdot)$ are defined in Theorem \ref{a40}, Theorem \ref{b1}, Theorem \ref{b19} respectively.

Our next objective is to show that for all $\omega\in\Omega_*$, one has
\begin{align}
&\limsup\limits_{n\rightarrow\infty}\frac{1}{n}\log \|\cM_n^E(\omega)\|\leq L(E),\lb{b20}\\
&\liminf\limits_{n\rightarrow\infty}\frac{1}{n}\log \|\cM_n^E(\omega)\|\geq L(E).\lb{b21}
\end{align}
The first inequality follows from \eqref{b14}. The second one requires more subtle analysis. We follow the proof of \cite[Theorem 1.2]{BuDaFi} modifying some model-specific arguments.

Our goal is to show that for a given $\varepsilon>0$ and $\omega\in\Omega_*$ one has
\begin{equation}\lb{b22}
\liminf\limits_{n\rightarrow\infty}\frac{1}{n}\log \|\cM_n^E(\omega)\|\geq L(E)-\varepsilon.
\end{equation}
To this end, let $u$ be the generalized eigenfunction satisfying \eqref{a30}, \eqref{b27} and normalized by $u(0)=1$. Let
\begin{equation}
K:=\left\lceil\varepsilon^{-1}\max\{ n_0(\omega,\varepsilon), n_1(\omega,\varepsilon), N_0\}\right\rceil,\no
\end{equation}
where $n_0(\omega,\varepsilon)$ is defined in Theorem \ref{b1}, $n_1(\omega,\varepsilon)$ is the smallest natural number for which
\begin{equation}\lb{c30}
\omega\in \bigcap_{i\geq n_1(\omega,\varepsilon)}\big(\Omega_-(\varepsilon)\setminus D_i(\varepsilon, l)\big),
\end{equation}
and $N_0\geq 2$ is a sufficiently large number that will be defined later.

{\bf Step 1.} There exist $\{a,b\}\subset \bbZ^+$ such that $a\leq K^9$, $a+K^3-2\leq b\leq a+K^3$ such that
\begin{equation}\lb{c1}
\left|G^E_{\omega,[a,b]}(j,k)\right|\leq d^2 \exp(-|j-k|L(E)+C_0\varepsilon K^3),
\end{equation}
for all $j,k\in[a,b]$, and some $C_0>0$ dependent only on the measure $\wti \mu$.

Using \eqref{a53} with $n=K^3$, one has
\begin{equation}
\sum_{s=0}^{K^6-1}\frac{1}{K^6}\left(L(E)-\frac{1}{K^3}\log\|\cM^E_{K^3}(T^{sK^3}\omega)\|\right)<\varepsilon.
\end{equation}
Hence, for some $0\leq t\leq K^3(K^6-1)$, one has
\begin{equation}\lb{c2}
L(E)-\frac{1}{K^3}\log\|\cM^E_{K^3}(T^{t}\omega)\|<\varepsilon.
\end{equation}
Combining \eqref{b4}--\eqref{b5}, \eqref{c2}, and the fact that the norm of $\cM^E_{K^3}(T^{t}\omega)$ is at most four times its largest entry and assuming that $N_0\geq (\varepsilon^{-1}{\log(4d^2)})^{1/3}$, one infers
\begin{equation}\lb{c3}
L(E)-\frac{1}{K^3}\log\frac{\cP^E_{b-a}(T^{a}\omega)}{\prod_{i=0}^{K^3}\sqrt{\omega_{i+t}}}<2\varepsilon,
\end{equation}
for some $a\in\{t,t+1\}$ and $b\in\{K^3+a, K^3+a-1, K^3+a-2\}$. Next, we pick $N_0$ sufficiently large so that  $K^3\geq 2\log^2(1+a)$
to ensure applicability of \eqref{b2} with $\zeta$ replaced by $a$ and $n$ replaced by $b-a$. Using this modification of \eqref{b2} and \eqref{c3} we obtain
\begin{align}
\left|G^E_{T^{a}\omega,b-a}(j,k)\right|&\leq \frac{\exp[(b-a-|j-k|)L(E)+C_0\varepsilon (b-a)]}{\cP^E_{b-a}(T^{a}\omega)}\prod_{i=0}^{b-a}\sqrt{\omega_{i+a}}\\
&\leq d^2\frac{\exp[(K^3-|j-k|)L(E)+C_0\varepsilon K^3]}{\exp(K^3(L(E)-2\varepsilon))}\lb{c12}\\
&=d^2\exp[-|j-k|L(E)+(C_0+2)\varepsilon K^3],
\end{align}
for all $a\leq j,k\leq b$.

{\bf Step 2.} Let $\ell:=\lfloor \frac{a+b}{2}\rfloor$, then
\begin{equation}\lb{c21}
|u(\ell)|\leq d^{-\frac12} e^{-2K^2}.
\end{equation}
Indeed, utilizing \eqref{b27}, \eqref{c1}, and the standard representation of $u$ in terms of Green's function on $[a,b]$ and the boundary values $u(a-1), u(b+1)$, one infers
\begin{align}
|u(\ell)|&\leq \sqrt{\omega_{a-1}}|u(a-1)|\left|G^E_{\omega,[a,b]}(a,\ell)\right|+\sqrt{\omega_{b}}|u(b+1)|\left|G^E_{\omega,[a,b]}(\ell,b)\right|\no\\
&\leq d^{5/2} C_u(K^9+K^3+1)e^{C_0\varepsilon K^3}(e^{-|\ell-a|L(E)}+e^{-|b-\ell|L(E)})\no\\
&\leq 2d^{5/2} C_u(K^9+K^3+1)e^{-K^3L(E)/3+C_0\varepsilon K^3}< e^{-2K^2},\lb{c15}
\end{align}
where we assumed that $N_0$ is sufficiently large so that \eqref{c15} holds whenever $K\geq N_0$.

{\bf Step 3.} Let us recall the normalization $u(0)=1$. One therefore has
\begin{equation}\lb{c20}
1=u(0)\leq \sqrt{\omega_b}|u(\ell)| \left|G^E_{\omega,[0,\ell-1]}(0,\ell-1)\right|.
\end{equation}
Combining \eqref{c21}, \eqref{c20} and denoting $p:=\ell-1$ we get
\begin{equation}
\left\|G^E_{\omega,[0,p]}\right\|\geq e^{K^2}.
\end{equation}
Using the inclusion \eqref{c30}, $0\leq p\leq K^9$ and \eqref{b12}, we infer
\begin{equation}
\frac{1}{j}\log\|\cM^E_j(T^{r}\omega)\|>L(E)-\varepsilon.
\end{equation}
for all $j\in\{K,2K\}$, $r\in[K^{10}, \overline{K}]$. This fact together with the Avalanche principle yield
\begin{equation}
\frac{1}{m}\log \|\cM_m^E(\omega)\|\geq L(E)-\varepsilon,
\end{equation}
for all $m\in[K^{11}+K^{10}, \overline{K}]$ (cf. \cite[(6.17)--(6.19)]{BuDaFi}). Since every sufficiently large integer belongs to one of these intervals we obtain \eqref{b22}.
\end{proof}
A function $f\in L^2(\Gamma)$ is said to have almost exponential decay rate $L>0$ if for all $\varepsilon>0$ there exists $C_{\varepsilon}>0$ such that
\begin{equation}\no
|f(v)|\leq C_{\varepsilon}e^{-(L-\varepsilon)d(o,v)},\ v\in\Gamma.
\end{equation}

\begin{proof}[Proof of Theorem \ref{AL}]
First, we define the full measure set
\begin{equation}\lb{d1}
\wti\Omega:=\bigcap_{k=0}^{\infty}T^{-k}(\Omega_0\cap \Omega_1),
\end{equation}
where $\Omega_0,\Omega_1$ are defined in Theorem \ref{a40} and Theorem \ref{a50} respectively.
Then for all $\omega\in\wti\Omega$ and $k\in\bbZ^+$, one has
\begin{align}
&\sigma(J_0(T^k\omega))=\sigma(\Delta(\omega))=[-2\sqrt{d_{\mu}}, 2\sqrt{d_{\mu}}],\ d_{\mu}:=\max(\supp\wti\mu)\lb{d2}\\
&\sigma_c(J_0(T^k\omega))=\emptyset\lb{d3}.
\end{align}
It follows from \eqref{d3} (combined with the general direct sum decomposition given by Theorem~\ref{a10}) that
%Since the set of eigenvalues of $J_0(\omega)$ is dense in $[-2\sqrt{d_{\mu}}, 2\sqrt{d_{\mu}}]$ and every eigenvalue of $J_0(\omega)$ is an eigenvalue of $\Delta(\omega)$, one has
\begin{equation}\lb{d4}
\sigma_c(\Delta(\omega))=\emptyset,
\end{equation}
and in particular $\Delta(\omega)$ possesses a basis of eigenfunctions.

Next, we prove that this basis can be chosen so that each of its elements decays exponentially. To this end, let
\begin{equation}
\{u^{N,k,r}\}_{r=1}^{\infty}\subset \ell^2(\bbZ^+),
\end{equation}
be a basis of exponentially decaying eigenfunctions of $J_{N}(k)$ (where $J_{N}(k)$ is the $\omega$-dependent Jacobi matrix associated with $J_0(\omega)$ as in Section~\ref{sec.2}).
%Suppose that
%\begin{equation}
%\Delta(\omega) f=Ef,\ E\in[-2\sqrt{d_{\mu}},2\sqrt{d_{\mu}}],\ 0\not=f\in\ell^2(\Gamma).
%\end{equation}

It is sufficient to show that for arbitrary admissible $N,k,r$, the function $\Phi^{-1}u^{N,k,r}$ has almost exponential decay rate $L(E)+\frac{\log 2}{2}$ on the tree graph $\Gamma$.
%$\ker\{\Delta(\omega)-E\}$ is spanned by the functions with almost exponential decay rate $L(E)+\frac{\log %2}{2}$. Let $\{N_i\}_{i=0}^{\infty}\subset\bbN$ be the set of indices for which $E$ is an eigenvalue of %$J(T^{N_i}\omega)$ and let $u^{N_i}=\{u^{N_i}_j\}_{j=0}^{\infty}\in\ell^2(\bbZ^+)$ be the corresponding %eigenfunction. Then the function $u^{N_i,k}:=u^{N_i}$, i.e. $u^{N_i}$ considered in the $k-$th copy of %$\ell^2(\bbZ^+)$, $1\leq k\leq \beta_{N_i}$, gives rise to a function on $\Gamma$ that vanishes at the %vertices of the first $N_i-1$ generations, and has almost exponential decay rate $L(E)+\frac{\log 2}{2}$.
To that end, let us fix arbitrary vertex $v$, gen$(v)=m$ and notice that
\begin{align}\no
[\Phi^{-1} u^{N,k, r}] (v)=
\begin{cases}
\sum_{j=0}^{\infty}u^{N,k, r}_j\varphi_{N,k,j}(v)=u^{N,k, r}_{m-N}\varphi_{N,k,m-N}(v) & m\geq N,\\
0,&m<N,
\end{cases}
\end{align}
since $\varphi_{N,k,j}(v)=0$ whenever gen$(v)\not=N+j$. Furthermore, by the Osceledec Theorem and Theorem \ref{a50}, for all $\varepsilon>0$ there exists $C(\varepsilon)>0$ such that
\begin{align}
|u^{N,k, r}_j|\leq C(\varepsilon)e^{-(L(E)-\varepsilon)j},\ j\in\bbZ^+.
\end{align}
Hence, using
\begin{equation}\no
\|\varphi_{N,k,j+1}\|_{L^{\infty}(\Gamma)}=\frac{\|\varphi_{N,k,j}\|_{\ell^{\infty}(\Gamma)}}{\sqrt{\omega_j}} \leq \frac{\|\varphi_{N,k,j}\|_{\ell^{\infty}(\Gamma)}}{\sqrt2},
\end{equation}
we obtain
\begin{align}
\begin{split}
\big|[\Phi^{-1} u^{N,k, r}] (v)\big|&=|u^{N,k, r}_{m-N}\varphi_{N_i,k,m-N}(v)|\leq\frac{ C(\varepsilon)e^{(L(E)-\varepsilon)N}e^{-(L(E)-\varepsilon)m}}{2^{(m-N)/2}}\\
&\lesssim_N C(\varepsilon)e^{-\big(L(E)+\frac{\log2}{2}-\varepsilon\big)m}.
\end{split}
\end{align}
%Due to block-diagonal structure of $\Phi\Delta\Phi^{-1}$, one has
%\begin{equation}
%\ker\{\Delta(\omega)-E\}=\overline{\text{span}\{\Phi^{-1} u^{N_i,k}: i\in\bbN, 1\leq k\leq \beta_{N_i} \}}.
%\end{equation}
\end{proof}

%%%%%%%%%%%%%%%%%%%%%%%%%%%%%%%%
%%%%%%%%%%%%%%%%%%%%%%%%%%%%%%%%

\end{document}